%% file: main.tex
\documentclass[a4paper]{article}

\usepackage{anysize}
\usepackage{multicol}
\usepackage{amsthm}
\usepackage{amsmath}
\usepackage{amssymb}
\usepackage{amsfonts}
\usepackage{amsxtra} 
\usepackage{mathrsfs}
\usepackage{color}
\usepackage{graphicx}
\usepackage[all]{xy}
\usepackage{enumerate}
\usepackage{stmaryrd}
\usepackage{booktabs}
\usepackage{tikz}
\usetikzlibrary{backgrounds, positioning, arrows}
\usepackage{mdframed, enumitem}

\marginsize{3cm}{3cm}{3.2cm}{3.2cm}

\usepackage{mathtools}

\title{Combinatorial Properties of the Raisonnier Filter}
\author{Spyridon Dialiatsis\thanks{Institute of Logic, Language and Computation (ILLC), Universiteit van Amsterdam}, Yurii Khomskii\thanks{Amsterdam University College (AUC), Universiteit van Amsterdam; and Fachbereich Mathematik, Universit\"at Hamburg}}

\input{newcommands}

\renewcommand{\PP}{{\mathscr{P}}}

\renewcommand{\L}{{\mathbf{L}}}

\newcommand{\Raisonnier}[1]{\mathcal{F}_{#1}}
\DeclarePairedDelimiterX{\set}[1]{\{}{\}}{\setargs{#1}}
\NewDocumentCommand{\setargs}{>{\SplitArgument{1}{\st}}m}
{\setargsaux#1}
\NewDocumentCommand{\setargsaux}{mm}
{\IfNoValueTF{#2}{#1} {#1\;\delimsize|\;\mathopen{}#2}}
\newcommand{\bigunion}[3]{{\bigcup_{#2}^{#3}{#1}}}
\DeclareMathOperator{\powerset}{\PP}
\newcommand{\powersetfin}[1]{\left[ #1 \right]^{<\omega}}
\newcommand{\powersetinfcount}[1]{\left[ #1 \right]^{\omega}}
\newcommand{\almostsubseteq}{\subseteq^*}

\newcommand{\almostsupseteq}{\supseteq^*}

\newcommand{\concat}{{^\frown}}

\DeclareMathOperator{\partitioningreals}{\mathcal{P}}
\newcommand{\goesthrough}{\in^*}
\newcommand{\slaloms}[1]{\mathcal{S}_{#1}}
\newcommand{\binaryslaloms}[2]{\mathcal{B}^{#1}_{#2}}
\DeclareMathOperator{\nat}{nat}
\DeclareMathOperator{\bin}{bin}
\newcommand{\partreal}[1]{\tilde{#1}}

\newcommand{\ProjectiveSigma}[1]{\mathbf{\Sigma}^1_{#1}}

\newcommand{\rapidideal}{\mathcal{R}}

\newcommand{\nullideal}{\mathcal{N}}
\newcommand{\countableideal}{\mathcal{I}_{\textrm{ctbl}}}
\newcommand{\boundingnum}{\mathfrak{b}}
\newcommand{\dominatingnum}{\mathfrak{d}}

\newcommand{\sequence}[2]{\left( #1 \right)_{#2}}

\renewcommand{\setminus}{-}

\renewcommand{\emptyset}{\varnothing}

\renewcommand{\restriction}{\mathord{\upharpoonright}}

\newcommand{\forallbutfin}{\forall^\infty}

\DeclarePairedDelimiter{\floor}{\lfloor}{\rfloor}

\newcommand{\eqev}{=^*}
\newcommand{\leqev}{\leq^*}

\begin{document}

\maketitle
 
\begin{abstract} The \emph{Raisonnier filter} is a combinatorial object isolated by Jean Raisonnier in \cite{RaisonnierFilter} in order to simplify Shelah's proof that if all $\SIGMA^1_3$ sets are Lebesgue-measurable then there is an inner model with an inaccessible cardinal.  In this paper, we study the combinatorics of a general version of the Raisonnier filter, with an eye to potential applications in descriptive set theory.  Among the most interesting of our results is a partial converse to Raisonnier's theorem,  which can be used to provide a new characterization of the statement ``all $\SIGMA^1_2$ sets are measurable''.  We also introduce an ideal on $\dw$ induced by the Raisonnier filter and study its cardinal characteristics,  connecting them to the well-known characteristics in Cicho\'n's Diagram. \end{abstract}

\section{Introduction}
 
The \emph{Solovay Model},  obtained by collapsing an inaccessible cardinal to $\omega_1$ using the L\'evy collapse, is a well-known model of set theory in which all projective sets of reals are Lebesgue measurable,  have the Baire property, and satisfy almost all other known regularity properties.  This construction   from   1970 was a celebrated early achievement showing what the recently-discovered forcing method was capable of \cite{SolovayAModel}.  However, it left the question open whether the inaccessible cardinal was really needed to produce a model with this property, or could be eliminated. This was settled by Shelah in 1984 when he proved---contrary to expectations---that an inaccessible cardinal was \emph{not} needed for the Baire property, but it \emph{was} needed for Lebesgue measurability \cite{ShelahInaccessible}.  

Shelah's proof of the latter result was simplified by Jean Raisonnier \cite{RaisonnierFilter} using purely combinatorial techniques, and this is the preferred exposition of Shelah's theorem to modern audiences.  Raisonnier's argument can be outlined as follows:

\begin{enumerate}[noitemsep]
\item Assume that all $\SIGMA^1_3$ sets of reals are Lebesgue measurable, but $\aleph_1^{L[a]} = \aleph_1$ for at least one parameter $a \in \ww$.
\item Define a special filter   $\F$ on $\omega$ using the reals of $L[a]$, and show that it is $\SIGMA^1_3$.
\item Use the assumption that there are $\aleph_1$-many reals in $L[a]$   to show that $\F$ is a non-trivial (proper) filter.
\item Use the weaker assumption that all $\SIGMA^1_2$ sets of reals are Lebesgue measurable to show that $\F$ is a \emph{rapid filter}.
\item By a classical result of Talagrand \cite{Talagrand},  a non-trivial rapid filter cannot be Lebesgue-measurable.  This contradicts the assumptions.
\item We conclude that if all $\SIGMA^1_3$ sets are Lebesgue measurable,  then   $\forall a \in \ww (\aleph_1^{L[a]} < \aleph_1)$,  which is known to imply  that   $\aleph_1$ is an inaccessible cardinal in $L$.
\end{enumerate}

\bigskip The filter $\F$ from above is often referred to as the \emph{Raisonnier Filter}.  In spite of the importance, and elegance, of this construction, the Raisonnier Filter itself has received relatively little attention in the literature outside of the context of this proof.  In fact, the only  other application we are aware of is a minor result on the separation of regularity properties for $\DELTA^1_3$- and $\DELTA^1_4$-levels in \cite{CichonPaper}.  

This paper is motivated by the need to understand the combinatorics of this filter better,  with an eye on potential applications to other problems in descriptive set theory.  We note that there are still many open questions about the consistency strength of projective regularity properties. The most famous and long-standing of these is probably the question asked by Mathias \cite{HappyFamilies} and others: \emph{what is the consistency strength of the assumption that all projective sets have the Ramsey property}? One might be tempted to use the Raisonnier filter to try to solve this problem; however,  one of our results (Theorem 5.3) provides a partial converse to point 4 in the argument above,  indicating that the Raisonnier filter is intimately tied to Lebesgue-measurability and is unlikely to have applications for other regularity properties.

\bigskip We will provide all the necessary definitions, notation and context in Section \ref{sectionDEF}.  In Section \ref{sectionEASY} we look at some simple properties of the filter.  Section \ref{sectionMAIN} contains the main technical result, which is then used in Section \ref{sectionCONVERSE} to prove the partial converse to Raisonnier's theorem. 
In Section \ref{sectionCARDINAL} we define an ideal on $\dw$ induced by the Raisonnier filter and study its cardinal characteristics. 

\bigskip The work in this paper was carried out by the first author in the context of his Master's Thesis  \cite{SpyrosThesis}. Some details omitted here for readability can be found in the thesis.

\section{Definitions and Basic Context} \label{sectionDEF}

Our notation and conventions follow the standard in modern set theory. We will often identify subsets of $\omega$ and elements of $\dw$ via characteristic functions, and we will take it for granted that subsets of $\PP(\omega)$, such as filters on $\omega$, can be viewed as sets of reals and have a descriptive complexity.

The term \emph{filter} will usually refer to a filter on $\omega$.  We will use the term \emph{trivial filter} to refer to the filter containing all subsets of $\omega$, and \emph{Fr\'echet filter} for the collection of all co-finite subsets of $\omega$.

The following definition is originally due to Mokobodzki \cite{mokobodzki1967}. 

\begin{Def} \label{def rapid filter}
A filter $\F$ on $\omega$ is a \emph{rapid filter} if for every increasing $f \in \ww$ there exists $a \in \F$ such that $|a \cap f(n)| \leq n$ for all $n$.
\end{Def}

The intuition behind the term \emph{rapid} comes from the fact that rapid filters contain subsets of $\omega$ that are arbitrarily sparse, in the sense that that their elements are sufficiently far apart.  Indeed, it is not difficult to see that   $\F$ is rapid if and only if the set of increasing enumerations of its members forms a \emph{dominating family} in $\ww$.  The precise bound in the definition is also not relevant, as will be seen below.

In   this paper we will often use partitions of $\omega$ into non-empty finite intervals in order to examine the values of reals in $\dw$ restricted to these intervals.
These partitions can be encoded by the following set of reals.
\begin{Def}
    We say that a real $d \in \ww$ is a \textit{partitioning real} if   it is strictly increasing and $d(0) = 0$. Let $\partitioningreals$ denote the set of partitioning reals.
    We will use the notation $I^d_n$ to abbreviate the interval $[d(n),d(n+1)) = \{k \in \omega \mid d(n) \leq k < d(n+1)\}$.
\end{Def}

With this notion, we can easily prove that the bound in the definition of rapidity is irrelevant, with some restrictions.
The following lemma gives us an equivalent definition that we will use most often.
\begin{Lem} \label{lem rapid filter alternative definition}
Let $\F$ be a filter and let $\varphi: \omega \to \omega$ be an increasing, unbounded function such that $\varphi(0) = 0$. Then the following are equivalent:
\begin{enumerate}[noitemsep, label=(\alph*)]
\item $\F$ is rapid.
\item For every $f \in \partitioningreals$ there exists $a \in \F$ such that $|a \cap f(n)| \leq \varphi(n)$ for all $n$. 
\end{enumerate}
\end{Lem}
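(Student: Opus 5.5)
Both directions go by translating between the two bounds $n$ and $\varphi(n)$. We reparametrize the given function along the level sets of $\varphi$, and then use monotonicity: enlarging a function only makes the conclusion of (a) or (b) harder to satisfy, so it suffices to handle a faster-growing one. Throughout, ``increasing'' for $\varphi$ may be read as non-decreasing. Unboundedness of $\varphi$ guarantees that each set $\{m : \varphi(m)\le k\}$ is finite, and $\varphi(0)=0$ guarantees that it is nonempty. Hence $M_k := \max\{m : \varphi(m)\le k\}$ is well defined for every $k$.

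\emph{(a)$\Rightarrow$(b).} Let $f\in\partitioningreals$. Define $g$ recursively by $g(k) := \max\bigl(f(M_k),\, g(k-1)+1\bigr)$, with $g(0) := f(M_0)+1$. Then $g$ is strictly increasing and $g(k)\ge f(M_k)$ for all $k$. Apply (a) to $g$ to get $a\in\F$ with $|a\cap g(k)|\le k$ for all $k$. Now fix $n$ and put $k=\varphi(n)$. Then $n\le M_k$, so $f(n)\le f(M_k)\le g(k)$, and therefore
\[ |a\cap f(n)| \le |a\cap g(k)| \le k = \varphi(n). \]
This is (b).

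\emph{(b)$\Rightarrow$(a).} Let $f$ be increasing. We build a partitioning real $d$ with $d(0)=0$, strictly increasing, and
\[ d(m)\ge f(\varphi(m+1)) \quad \text{for all } m\ge 1, \]
for instance by a recursion analogous to the one for $g$. Apply (b) to $d$ to get $a\in\F$ with $|a\cap d(m)|\le\varphi(m)$ for all $m$.

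Now take any $k$ with $k\ge\varphi(1)$ and let $m=M_k$. Then $m\ge 1$, and by maximality $\varphi(m+1)>k$, so $f(k)\le f(\varphi(m+1))\le d(m)$. Hence
\[ |a\cap f(k)| \le |a\cap d(m)| \le \varphi(m) \le k. \]

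\emph{The main obstacle.} The finitely many $k<\varphi(1)$ are not covered by this estimate, since $d(0)=0$ gives no information there; this is the one real subtlety. In particular, for $k=0$, (a) demands $a\cap f(0)=\emptyset$, while (b) says nothing about small initial segments. I would resolve it by replacing $a$ with
\[ a' := a\setminus f(\varphi(1)). \]
Since $a'\subseteq a$, the bound $|a'\cap f(k)|\le k$ still holds for every $k\ge\varphi(1)$. For $k<\varphi(1)$ we have $f(k)\le f(\varphi(1))$, so $a'\cap f(k)=\emptyset$. The step $a'\in\F$ uses that $\F$ contains the Fr\'echet filter, which is the standing convention for rapid filters; I would state this assumption explicitly.

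Without that convention (b)$\Rightarrow$(a) can fail. The filter generated by a singleton $\{0\}$ satisfies (b), because $n=0$ imposes no constraint and every $n\ge1$ gives $|\{0\}\cap f(n)|\le 1\le\varphi(n)$ once $\varphi(1)\ge 1$. Yet it fails (a) for every $f$ with $f(0)>0$.
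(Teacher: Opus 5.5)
Your proof is correct. The direction (a)$\Rightarrow$(b) is the paper's argument. The paper sets $g(m)=f(\max N_m)$ with $N_m=\{k:\varphi(k)\le m\}$, which is your $M_m$, and concludes $|a\cap f(n)|\le|a\cap g(\varphi(n))|\le\varphi(n)$ exactly as you do. Your recursive correction only makes $g$ strictly increasing, which the paper's $g$ need not be (take $\varphi(k)=2k$). So your version works under either reading of ``increasing'' in Definition~\ref{def rapid filter}.

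For (b)$\Rightarrow$(a) the routes differ. The paper gives no argument and defers to Lemma 4.6.2 of Bartoszy\'nski--Judah. You give a self-contained dual reparametrization:
\begin{itemize}
\item choose $d\in\partitioningreals$ with $d(m)\ge f(\varphi(m+1))$ for $m\ge 1$;
\item read off $|a\cap f(k)|\le|a\cap d(M_k)|\le\varphi(M_k)\le k$ for $k\ge\varphi(1)$;
\item delete the finite set $f(\varphi(1))$ from $a$.
\end{itemize}
The citation keeps the paper short. Your route has the advantage of exposing a hypothesis the statement leaves implicit. The deletion step needs $\F$ to contain the Fr\'echet filter. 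Your example of the principal filter at $0$, with for instance $\varphi(n)=n$, shows that (b)$\Rightarrow$(a) really is false for arbitrary filters. So the lemma as literally stated needs this extra assumption, and making it explicit, as you do, is the right repair.

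This costs the paper nothing. The lemma is only applied to Raisonnier filters $\Raisonnier{X}$, all of which extend the Fr\'echet filter. Moreover, the direction (a)$\Rightarrow$(b), used in Lemma~\ref{lem rapid Raisonnier filter to binary slalom}, holds for every filter.
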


\begin{proof} The direction $(b) \Rightarrow (a)$ easily follows from \cite[Lemma 4.6.2.]{BaJu95}. For the other direction,
    assume that $\mathcal{F}$ is rapid, and let $\varphi \in \ww$ have the properties defined above.
    Given an $m \in \omega$, let $N_m = \set{k \in \omega \st \varphi(k) \leq m}$.
    Let $f \in \partitioningreals$ and define the function $g \in \ww$ such that $g(m) = f(\max N_m)$.
    For every $m \in \omega$, since $\varphi(0) = 0 \leq m$, the set $N_m$ is nonempty. 
    Moreover, because $\varphi$ is increasing and unbounded, there exists $n \in \omega$ such that $\varphi(k) > m$ for all $k \geq n$.
    Therefore, $N_m$ is finite and $\max N_m$ is well defined.
    As a result, the function $g$ is also well defined.
    By assumption then, there exists an $a \in \mathcal{F}$ such that $|a \cap g(m)| \leq m$ for all $m \in \omega$.
    Let $n \in \omega$.
    By definition, $n \in N_{\varphi(n)}$ and so $n \leq \max N_{\varphi(n)}$.
    Since $f$ is strictly increasing by assumption, $f(n) \leq f(\max N_{\varphi(n)}) = g(\varphi(n))$, and, as a result, $|a \cap f(n)| \leq |a \cap g(\varphi(n))| \leq \varphi(n)$.
\end{proof}

\bigskip We will be dealing a lot with \emph{Lebesgue measurable} sets of reals and the \emph{null ideal} $\nullideal$ defined on $\ww$ or $\dw$, and assume that the reader is familiar with this concept.  
Likewise, we will assume familiarity with basic cardinal characteristics of the continuum, and with the concept of \emph{random forcing} and Solovay's characterization of random reals as those that avoid Borel null sets coded in the ground model.  All of these definitions can be found, e.g., in \cite{BaJu95}.  

We recall Bartoszy\'{n}ski's characterization of the additivity and cofinality numbers of the null ideal in terms of \emph{slaloms}  \cite{BartAdd} (see also \cite[pp.~50-53]{BaJu95}).

\begin{Def}  Let $\varphi: \omega \to \omega$ be any unbounded function.  A function $S: \omega \to [\omega]^{<\omega}$ is called a \emph{$\varphi$-slalom} if $|S(n)| \leq  \varphi(n)$ for all $n$. When $\varphi(n)=n$,  we  refer to  $\varphi$-slaloms simply as \emph{slaloms}. If  $x \in \ww$ is a real, then we say that \emph{$x$ goes through $S$},  denoted by $x \in^*S$, if $f(n) \in S(n)$ for all but finitely many $n$.  If $X \subseteq \ww$ is a set of reals, then we say that \emph{$X$ goes through $S$}, or that \emph{$S$ captures $X$}, if  $x \in^*S$ for every $x \in X$.  
\end{Def}

In \cite{BartAdd} Bartoszy\'{n}ski proved that  $\add(\N)$  is the least cardinality of a set $X \subseteq \ww$ that does not go through any slalom,  and $\cof(\N)$  is the least number of slaloms necessary for every real in $\ww$ to go through at least one of them. But this  combinatorial argument also has a descriptive set theoretic component.

\begin{Thm}[Solovay/Bartoszy\'{n}ski] \label{thm slalom equivalence}
The following are equivalent:
\begin{enumerate}[noitemsep, label=(\alph*)]
\item All $\SIGMA^1_2$ sets of reals are Lebesgue measurable.
\item For all $a \in \ww$ there are measure-one-many random reals over $\L[a]$.
\item For all $a \in \ww$ there exists a slalom $S$ such that $\ww \cap \L[a] \subseteq^* S$.
\end{enumerate} \end{Thm}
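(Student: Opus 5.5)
The plan is to prove the cycle (a)$\Rightarrow$(b)$\Rightarrow$(c)$\Rightarrow$(a), relativizing to an arbitrary parameter $a$ throughout. We treat (a)$\Leftrightarrow$(b) as Solovay's classical characterization and establish (b)$\Leftrightarrow$(c) through Bartoszy\'nski's combinatorial analysis of the null ideal, made effective with respect to $\L[a]$.

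\textbf{(a)$\Rightarrow$(b).} Fix $a$ and consider the union $U$ of all Borel null sets coded in $\L[a]$. Because $\L[a]$ has a $\SIGMA^1_2(a)$ well-order of its reals, $U$ is $\SIGMA^1_2(a)$, and its complement is exactly the set of reals random over $\L[a]$. By (a), $U$ is measurable. If $U$ had positive measure, we would derive a contradiction via a Fubini argument. When $\omega_1^{\L[a]}<\omega_1$, $U$ is a countable union of null sets, hence null. Otherwise, use the $\SIGMA^1_2(a)$ good well-order to build a set $\{(x,y): x \text{ precedes } y\}$ of $\SIGMA^1_2$-type. Its vertical sections are null and its horizontal sections are co-null, which contradicts Fubini as in Solovay's argument. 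So $U$ is null, and the randoms have measure one.

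\textbf{(b)$\Rightarrow$(a).} Let $A$ be $\SIGMA^1_2(a)$. Write $A$ as the projection of a tree in $\L[a]$ via the Shoenfield tree, and form the Borel sets $B_\alpha$ coded in $\L[a]$. For an $\L[a]$-random $x$, absoluteness gives that $x\in A$ iff $x\in$ some Borel set $B$ coded in $\L[a]$ determined by the Boolean value of ``$\dot r\in A$'' in random forcing over $\L[a]$. Since measure-one many reals are random, $A$ and $B$ differ by a null set, so $A$ is measurable.

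\textbf{(b)$\Leftrightarrow$(c).} This is the core step and is Bartoszy\'nski's theorem relativized. For (c)$\Rightarrow$(b), note that every null set is contained in one of the standard form $\{x: \exists^\infty n\, x\restriction I_n \in \sigma(n)\}$ with small measure pieces. Bartoszy\'nski's coding shows that, given a slalom capturing all reals of $\L[a]$ (which encode all such $\sigma$'s coded in $\L[a]$), one can build a single null set $N$ containing every Borel null set coded in $\L[a]$. Its complement then consists of randoms and has measure one. For (b)$\Rightarrow$(c), use the converse translation $\cof(\N)$/$\add(\N)$-style. From measure-one many randoms, obtain a single null set covering the union of $\L[a]$-coded null sets, and more precisely that $\add$-type property restricted to $\L[a]$. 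Bartoszy\'nski's combinatorics converts a null set containing the union of a family of null sets into a slalom capturing the corresponding family of reals: each $f\in\ww\cap\L[a]$ is mapped to a null set $N_f$ coded in $\L[a]$, and reading off the slalom from a cover of $\bigcup N_f$ gives $S$ with $f\in^* S$.

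The main obstacle is this last translation. A small union of null sets yields only a localization by slaloms after the delicate argument in \cite[Thm.~2.3.9]{BaJu95}, which passes through the intermediate step ``$\ww\cap\L[a]$ is bounded'' and the small-set characterization. We will follow that proof, checking that every construction is uniform and hence applies to the family $\ww\cap\L[a]$ rather than to an arbitrary set of size below $\add(\N)$.
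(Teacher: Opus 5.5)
Your overall route matches the paper's, which simply cites both halves: Solovay's characterization gives (a)$\Leftrightarrow$(b), and Bartoszy\'nski's slalom characterization of $\add(\nullideal)$, relativized to $\L[a]$, gives (b)$\Leftrightarrow$(c). However, the one step you actually spell out, the Fubini argument for (a)$\Rightarrow$(b), fails as written.

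Points of $U$ are generally not in $\L[a]$, so ``$x$ precedes $y$'' in the $\L[a]$-well-order only defines a subset of $(\ww\cap\L[a])^2$. That set never mentions $U$. Its sections $\{y : x<_{\L[a]}y\}$ are co-countable in $\ww\cap\L[a]$, not co-null. Fubini applied to it only yields $\mu(\ww\cap\L[a])=0$, not a contradiction with $\mu(U)>0$. Your case split is a symptom of this: (a) is consistent with $\omega_1^{\L[a]}=\omega_1$, so the contradiction has to come from $\mu(U)>0$ itself.

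The missing idea is to rank points of $U$ by codes. For $x\in U$, let $\rho(x)$ be the $<_{\L[a]}$-least code $c$ of a Borel null set $B_c$ with $x\in B_c$. Put $Z=\{(x,y)\in U\times U:\rho(x)\le_{\L[a]}\rho(y)\}$, which is $\SIGMA^1_2(a)$ because the well-order is good, and hence measurable by (a).
\begin{itemize}
\item For fixed $y\in U$, the section $\{x:(x,y)\in Z\}$ is the union of the $B_c$ over null codes $c\le_{\L[a]}\rho(y)$. This is a countable union of null sets, so $\mu(Z)=0$.
\item For fixed $x\in U$, the section $\{y:(x,y)\in Z\}$ is $U$ minus the null set $\bigcup\{B_c:c<_{\L[a]}\rho(x)\}$. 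So $\mu(Z)=\mu(U)^2$.
\end{itemize}
Hence $\mu(U)=0$. No case distinction is needed, since initial segments of $<_{\L[a]}$ are countable regardless of $\omega_1^{\L[a]}$.

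The rest is fine at the level of detail the paper gives. Your (b)$\Rightarrow$(a) is the standard proof: use the Borel set coded in $\L[a]$ that represents the Boolean value $[\![\dot r\in A]\!]$ in the random algebra of $\L[a]$, together with Shoenfield absoluteness between $\L[a][x]$ and $V$. The Shoenfield-tree sentence is superfluous.

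One caution for your uniformity check in (b)$\Rightarrow$(c). If the null sets $N_f$ are built using a bound $g$ for $\ww\cap\L[a]$, they are coded in $\L[a,g]$, not in $\L[a]$. You must then apply (b) to the parameter $(a,g)$. This is harmless because (b) quantifies over all parameters, and it is exactly the move the paper makes with $\L[a,d]$ in Lemma~\ref{lem rapidideal and domination to slaloms}.
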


\begin{proof} The equivalence between (a) and (b) is Solovay's well-known characterization theorem implicit in \cite{SolovayAModel}; see \cite[Theorem 9.3.1]{BaJu95} for a clear proof. The equivalence between (b) and (c) is a direct adaptation of the  combinatorial principles used to prove the characterization of $\add(\N)$ in terms of slaloms. See also \cite[Theorem 2.3.11(1)]{BaJu95}.
\end{proof}

\bigskip

Finally we introduce the main combinatorial object:   the \emph{Raisonnier filter}.  We will provide the original definition below, and a number of alternative definitions is also available, see \cite[Theorem 4.6.13]{BaJu95}.

\begin{Def}
    Let $x,y \in \dw$ be such that $x \neq y$. We define their \textit{splitting point} as the natural number $h(x,y) = \min\set{ n \in \omega \st x(n) \neq y(n) }$. Given a set $X \subseteq \dw$, we define the set of \textit{splitting points of $X$} as $ H(X) = \set{h(x,y) \st x,y \in X, \, x \neq y}$.
\end{Def}

\begin{Def} [The Raisonnier Filter] \label{Raisonnier filter definition}
    We define the set $\Raisonnier{X} \subseteq \dw$ as follows: for $a \in \dw$, we let $a \in \Raisonnier{X}$ if and only if there exists a countable collection $\{Y_n \mid  n<\omega\}$ of subsets of $\dw$ such that $X \subseteq \bigunion{Y_n}{n<\omega}{}$ and $a \supseteq \bigunion{H(Y_n)}{n<\omega}{}$.
\end{Def}

It can easily be seen that this defines a filter on $\omega$ extending the Fr\'echet filter, which is non-trivial (proper) if and  only if   $X$ is uncountable. In   Raisonnier's original proof, the role of this filter is in the form of the following theorem.
\begin{Thm}[Raisonnier, {\cite{RaisonnierFilter}}] \label{thm Raisonnier's theorem}
    If every $\ProjectiveSigma{2}$ set is Lebesgue measurable and $a \in \ww$, then $\Raisonnier{\dw \cap L[a]}$ is a rapid filter.
\end{Thm}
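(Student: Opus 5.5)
We use Theorem \ref{thm slalom equivalence}(c) and Lemma \ref{lem rapid filter alternative definition}. Fix $a \in \ww$ and write $X = \dw \cap L[a]$. By Lemma \ref{lem rapid filter alternative definition}, it suffices to produce, for each $f \in \partitioningreals$, a set $b \in \Raisonnier{X}$ with $|b \cap f(n)| \leq \varphi(n)$ for all $n$. We are free to choose $\varphi$ increasing and unbounded with $\varphi(0)=0$, for instance $\varphi(n)$ roughly $n^2$ or $n 2^n$; we pick whatever the counting below demands. Since the set $f$ may be outside $L[a]$, we apply Theorem \ref{thm slalom equivalence}(c) to the parameter $\langle a, f\rangle$. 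This gives a slalom $S$ capturing all of $\ww \cap L[a,f]$, and in particular all reals of $L[a]$ coded relative to $f$.

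Next we turn each $x \in X$ into a natural-number real. Let $\hat x(n)$ code the finite string $x \restriction f(n+1)$, or better $x\restriction I^f_n$ together with enough history. Then $\hat x \in L[a,f]$, so $\hat x \in^* S$. By changing the slalom to a $\psi$-slalom for a fast-growing $\psi$, or by grouping intervals, we may assume $S(n)$ is a set of at most $n$ binary strings of length $f(n+1)$ and that $x\restriction f(n+1) \in S(n)$ for all $n \geq k$. Set
\[
Y_{k,s} = \set{x \in X \st x \restriction f(k) = s \text{ and } \forall n \geq k\ (x\restriction f(n+1) \in S(n))},
\]
for $k<\omega$ and $s \in 2^{f(k)}$. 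These are countably many sets covering $X$, so $b = \bigcup_{k,s} H(Y_{k,s}) \in \Raisonnier{X}$, and we enlarge $b$ freely by finite sets.

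The counting works as follows. A splitting point $h(x,y)$ of two members of $Y_{k,s}$ is at least $f(k)$. If $h(x,y) \in I^f_n$ with $n \geq k$, then $x\restriction f(n+1)$ and $y \restriction f(n+1)$ are distinct elements of $S(n)$. So $H(Y_{k,s}) \cap I^f_n$ is contained in the set of splitting points among the at most $n$ strings of $S(n)$, which has size at most $n-1$. This bound does not depend on $s$. The splitting points of $S(n)$ for different $k$ are the same set, so
\[
b \cap I^f_n \subseteq H(S(n)) \cup \bigcup_{k>n} (\ldots),
\]
and the second part is empty because $h \geq f(k) > f(n+1)$ once $k > n$. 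Hence $|b\cap I^f_n| \leq n$ for every $n$, and $|b \cap f(n)| \leq \sum_{m<n} m \leq n^2$. Taking $\varphi(n)=n^2$ satisfies the lemma's hypotheses. The only subtlety, which I expect to be the main obstacle, is the uniformity of the slalom bound: it must hold for all $n$ without a finite exceptional set, since $\varphi(0)=0$ forces $b \cap f(0)=\emptyset$, which is trivial, and small $n$ matter.

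To handle small $n$, note that each $Y_{k,s}$ with $k>n$ contributes nothing below $f(n+1)$, as shown above. The sets with $k \leq n$ all have the constraint $S(n)$ at level $n$, so the bound above is already uniform. We must also ensure that each $S(n)$ is genuinely a slalom for all $n$, and that the coding $x \mapsto \hat x$ is absolute, which is clear since $\hat x$ is computed from $x$ and $f$. Finally, we check that the passage from a slalom on $\ww$ to a slalom of strings is legitimate. It is, because $S(n)$ may simply be intersected with the codes of strings in $2^{f(n+1)}$ without increasing its size.
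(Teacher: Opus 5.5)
Your proof is correct and follows essentially the paper's route. You apply Theorem~\ref{thm slalom equivalence}(c) to $\langle a,f\rangle$ and encode each $x \in \dw \cap L[a]$ by its pieces along $f$, so that $\dw\cap L[a]$ is captured by an $f$-binary-type slalom; covering by the countably many $Y_{k,s}$ and counting splitting points level by level is exactly the step the paper packages as Lemma~\ref{lem binary slaloms to rapid Raisonnier filter} and defers to Bartoszy\'nski--Judah. (Two small points: the detour through $\psi$-slaloms or grouped intervals is unnecessary, since intersecting $S(n)$ with the codes of strings in $2^{f(n+1)}$ already suffices, as you note yourself; and for $k>n$ you only get $f(k)\geq f(n+1)$, not strict inequality, which still excludes $I^f_n$.)
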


\section{Simple properties of the Raisonnier filter} \label{sectionEASY}

The elements of the Raisonnier filter $\Raisonnier{X}$ contain the splitting points of all countable covers of a set $X \subseteq \dw$. 
As $X$ is a cover of itself, we have that $H(X) \in \Raisonnier{X}$.
However, since $\Raisonnier{X}$ extends the Fr\'echet filter, it also contains the sets $\set{k \in H(X) \st k \geq n}$ for every $n \in \omega$.
Furthermore, countable partitions of $X$ are also countable covers of it.
Because of this, if it is possible to define a partition  $\{X_n \mid n<\omega\}$ of $X$ that avoids some splitting points $K \subseteq H(X)$ in the sense that $K \cap \bigunion{H(X_n)}{n<\omega}{} = \varnothing$, then $H(X) \setminus K \in \Raisonnier{X}$ as well.

Therefore, the Raisonnier filter of a set $X \subseteq \dw$ can give us an indication of its size, specifically in regard to its splitting points. The intuition is the following:  for ``large'' sets of reals, it is not possible to avoid many of their splitting points by using countable partitions.
As a result, $\Raisonnier{X}$ will contain only ``dense'' sets of naturals, and will be closer to the Fr\'echet filter.
On the other hand, if a set $X$ is ``small'' and there are countable partitions of it avoiding many of its splitting points, then $\Raisonnier{X}$ will be large and contain many ``sparse'' sets of naturals.

\bigskip The purpose of this section is to look more closely at basic properties of $\Raisonnier{X}$ and how they depend on properties of $X$, confirming the intuition mentioned above.


 
 \begin{Prop} \label{prop Raisonnier filter subset}
    If $X, Y \subseteq  \dw$ are such that $X \subseteq Y$, then $\Raisonnier{Y} \subseteq \Raisonnier{X}$.
\end{Prop}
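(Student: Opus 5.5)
The plan is to unwind Definition \ref{Raisonnier filter definition} directly and observe that any witness for membership in $\Raisonnier{Y}$ already serves as a witness for membership in $\Raisonnier{X}$.

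Let $a \in \Raisonnier{Y}$. By definition there is a countable collection $\{Y_n \mid n<\omega\}$ of subsets of $\dw$ with two properties:
\begin{itemize}
\item $Y \subseteq \bigunion{Y_n}{n<\omega}{}$;
\item $a \supseteq \bigunion{H(Y_n)}{n<\omega}{}$.
\end{itemize}
Since $X \subseteq Y$, transitivity of inclusion gives $X \subseteq \bigunion{Y_n}{n<\omega}{}$. So the same family $\{Y_n \mid n<\omega\}$ is a countable cover of $X$. The second condition does not mention the covered set at all, so it still holds unchanged. Hence this family witnesses $a \in \Raisonnier{X}$. As $a$ was arbitrary, $\Raisonnier{Y} \subseteq \Raisonnier{X}$.

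There is no real obstacle here. The only point to be careful about is that Definition \ref{Raisonnier filter definition} lets the sets $Y_n$ be arbitrary subsets of $\dw$, not necessarily subsets of the covered set. That is exactly why the cover of $Y$ can be reused verbatim, with no need to intersect with $X$.

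If the definition had instead required covers by subsets of $X$, I would replace each $Y_n$ by $Y_n \cap X$ and note that $H$ is monotone: $Z \subseteq Z'$ implies $H(Z) \subseteq H(Z')$. This gives $\bigunion{H(Y_n \cap X)}{n<\omega}{} \subseteq \bigunion{H(Y_n)}{n<\omega}{} \subseteq a$, so the argument would go through either way.
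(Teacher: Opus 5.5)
Your proof is correct and is the same as the paper's: a countable cover of $Y$ witnessing $a \in \Raisonnier{Y}$ is also a countable cover of $X$, and the splitting-point condition does not depend on which set is being covered. Your side remark about intersecting each $Y_n$ with $X$, using that $H$ is monotone, is also correct, but it is not needed under the definition as stated.
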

\begin{proof}
    This is immediate from the fact that any countable cover of a set is also a countable cover of its subsets.
\end{proof}

Given a countable collection of sets and their countable covers, in order to obtain a countable cover of their union it is enough to take the union of the covers.
The splitting points of this new cover will naturally be the union of the splitting points of the individual covers.
Therefore, the Raisonnier filter of a countable union can be generated as given below.
\begin{Prop} \label{prop Raisonnier filter union}
    Let $X \subseteq \dw$ and $X = \bigcup_{n<\omega} X_n$. Then 
    \[
        \Raisonnier{X} = \set*{ \bigunion{a_n}{n<\omega}{}  \; \st \;  \forall n < \omega ( a_n \in \Raisonnier{X_n} ) } \,.
    \]
\end{Prop}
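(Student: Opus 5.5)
We prove the equality by double inclusion, working straight from Definition \ref{Raisonnier filter definition}. Call the right-hand side $\mathcal{G}$.

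For $\mathcal{G} \subseteq \Raisonnier{X}$, fix $a_n \in \Raisonnier{X_n}$ for each $n$. For each $n$ we choose, using countable choice, a countable cover $\{Y^n_m \mid m<\omega\}$ of $X_n$ with $a_n \supseteq \bigcup_m H(Y^n_m)$. The doubly indexed family $\{Y^n_m \mid n,m<\omega\}$ is countable. It covers $X = \bigcup_n X_n$. Its splitting points satisfy
\[
\bigcup_{n,m} H(Y^n_m) \subseteq \bigcup_n a_n .
\]
Hence $\bigcup_n a_n \in \Raisonnier{X}$.

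For $\Raisonnier{X} \subseteq \mathcal{G}$, take $a \in \Raisonnier{X}$ and set $a_n = a$ for every $n$. Then $\bigcup_n a_n = a$, so it suffices to show $a \in \Raisonnier{X_n}$ for each $n$. This holds by Proposition \ref{prop Raisonnier filter subset}, since $X_n \subseteq X$ gives $\Raisonnier{X} \subseteq \Raisonnier{X_n}$. One could also argue directly: the witnessing cover of $X$ is also a cover of $X_n$.

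Neither step is a real obstacle. The only point needing care is the simultaneous choice of the covers in the first inclusion, which uses countable choice; this is harmless in the ZFC setting of the paper.
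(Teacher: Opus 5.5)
Your proof is correct and follows the paper's argument essentially verbatim: for one inclusion you merge the witnessing countable covers of the $X_n$ into a single doubly indexed cover of $X$, and for the other you take $a_n = a$ and use $\Raisonnier{X} \subseteq \Raisonnier{X_n}$ (Proposition \ref{prop Raisonnier filter subset}). Your remark about countable choice makes explicit a step the paper uses without comment.
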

\begin{proof}
     Consider an arbitrary collection $\set{a_n \in \Raisonnier{X_n} \st n < \omega}$. For every $n < \omega$ there exists a countable cover $\{X_{n,m} \mid m < \omega\}$ of $X_n$ such that $a_n \supseteq \bigunion{H(X_{n,m})}{m<\omega}{}$. Let $a = \bigunion{a_n}{n<\omega}{}$. We have that
    \[
        X = \bigunion{X_n}{n<\omega}{} \subseteq 
        \bigunion{\bigunion{X_{n,m}}{m<\omega}{}}{n<\omega}{} \quad \text{and} \quad
        a = \bigunion{a_n}{n<\omega}{} \supseteq 
        \bigunion{\bigunion{H(X_{n,m})}{m<\omega}{}}{n<\omega}{} \; \text{,}
    \]
    meaning that $a \in \Raisonnier{X}$.    
    On the other hand, let $a \in \Raisonnier{X}$. For any $n < \omega$ we have that $a \in \Raisonnier{X} \subseteq \Raisonnier{X_n}$, which means that $a = \bigunion{a_n}{n<\omega}{}$ for $a_n = a \in \Raisonnier{X_n}$ for all $n< \omega$.
\end{proof}

\bigskip
The following proposition gives us an example of a Raisonnier filter with a concrete definition.
The powerset of an infinite $a \subseteq \omega$ is so large, that it is impossible to avoid more than finitely many of its splitting points using a countable cover.
These splitting points must then be elements of $a$ itself, as subsets of $a$ can only differ in whether they include elements of $a$.

\begin{Prop} \label{prop Raisonnier filter powerset}
    For every $a \in \powersetinfcount{\omega}$, $\Raisonnier{\powerset(a)} = \set{ b \in \dw \st a \almostsubseteq b }$.
\end{Prop}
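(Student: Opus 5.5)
We prove the two inclusions separately.

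For $\supseteq$, let $b \supseteq^* a$ and let $F = a \setminus b$, which is finite. We partition $\powerset(a)$ into the $2^{|F|}$ pieces $Y_s = \set{x \subseteq a \st x \cap F = s}$ for $s \subseteq F$. If $x \neq y$ lie in the same $Y_s$, they agree on $F$, and they agree outside $a$ since both are subsets of $a$. Hence $h(x,y) \in a \setminus F \subseteq b$. So $\bigcup_s H(Y_s) \subseteq b$, and $b \in \Raisonnier{\powerset(a)}$.

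For $\subseteq$, let $b \in \Raisonnier{\powerset(a)}$, witnessed by a cover $\{Y_n \mid n<\omega\}$ of $\powerset(a)$ with $\bigcup_n H(Y_n) \subseteq b$. Suppose toward a contradiction that $a \setminus b$ is infinite. The idea is that the set $C = \set{x \subseteq a \st x \cap b = \emptyset}$, which can be identified with $\powerset(a\setminus b)$, is uncountable, yet each $Y_n$ meets it in at most one point. Indeed, if $x \neq y$ both lie in $Y_n \cap C$, then $x$ and $y$ agree off $a \setminus b$, so $h(x,y) \in a \setminus b$. But also $h(x,y) \in H(Y_n) \subseteq b$, which is a contradiction. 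Therefore $C \subseteq \bigcup_n (Y_n \cap C)$ is countable. This is impossible, since $a\setminus b$ is infinite and so $C$ has size continuum. Hence $a \setminus b$ is finite, that is, $a \subseteq^* b$.

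The only point that needs care is the general observation used in both directions: for $x,y \subseteq a$, the splitting point $h(x,y)$ lies in $a$, and more precisely in any set outside of which $x$ and $y$ agree. This is immediate from the definition of $h$, so neither direction presents a real obstacle.
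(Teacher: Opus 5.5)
Your proof is correct, and both inclusions go by a different route from the paper.

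For $\subseteq$, the paper fixes an infinite $a' \subseteq a \setminus b$ and diagonalizes explicitly. At stage $n$ it chooses the value of $x$ at the $n$-th element of $a'$ so that $x$ escapes $Z_n$, using that $Z_n$ cannot split at that coordinate because it is not in $b$. You replace this construction with a counting argument. On $C = \powerset(a\setminus b)$ every splitting point lies in $a \setminus b$, which is disjoint from $b \supseteq H(Y_n)$. So $H(Y_n \cap C) = \emptyset$, each $Y_n \cap C$ has at most one element, and the uncountability of $C$ gives the contradiction. Conceptually, $b$ would also lie in $\Raisonnier{\powerset(a\setminus b)}$ by monotonicity. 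That filter contains $H(\powerset(a\setminus b)) = a\setminus b$, and hence would contain $\emptyset$. You are thus proving on the spot that the Raisonnier filter of an uncountable set is proper, which the paper only asserts as ``easily seen.'' The paper's version exhibits the escaping real explicitly; it is essentially Cantor's diagonal argument adapted to the cover. Yours is shorter, avoids the case analysis, and shows that nothing beyond properness is involved.

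For $\supseteq$, the paper uses $H(\powerset(a)) = a$ together with the unproved claim that $\Raisonnier{X}$ is a filter extending the Fr\'echet filter. Your partition of $\powerset(a)$ according to the trace on the finite set $a \setminus b$ checks membership directly and is self-contained. It is also exactly the argument that justifies that Fr\'echet-extension claim in the first place.
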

\begin{proof}
    
    Let $a \in \powersetinfcount{\omega}$.
    First, observe that $a = H(\powerset(a)) \in \Raisonnier{\powerset(a)}$.
    Then, since $\Raisonnier{\powerset(a)}$ extends the Fr\'echet filter, it will contain $\set{k \in a \st k \geq n}$ for all $n \in \omega$ and, as it is upwards closed, it must also contain every $b \almostsupseteq a$.
    
    
    On the other hand, let $b \in \Raisonnier{\powerset(a)}$ and assume towards contradiction that
    $|a \setminus b| = \aleph_0$. 
    In this case, we have some $a' \subseteq a$ with $|a'| = \aleph_0$ and $a' \cap b = \varnothing$. Let $\hat{a}' \in \ww$ be its increasing enumeration. By definition, $b \supseteq \bigcup_{n < \omega}H(Z_n)$ for some countable covering $(Z_n)_{n < \omega}$ of $\powerset(a)$.
    Through diagonalization, we can construct a real in $\powerset(a)$ that is not in the covering. We will define a sequence $\set{ x_n \in \set{0,1}^{\hat a'(n)} \mid n < \omega }$ as follows. 
    First, let $x_0(k) = 0$ for all $k < \hat a'(0)$. Assuming we have defined $x_n \in \set{0,1}^{\hat a'(n)}$, let $x_{n+1} \restriction { \hat a'(n)} = x_n$.
    We have the following three cases:
    \begin{enumerate}
        \item There exists no $z \in Z_n$ such that $z \restriction \hat{a}'(n) = x_n$. In this case, we can let $x_{n+1}(\hat{a}'(n)) = 0$.
        \item There is an $i \in \set{0,1}$ such that $z(\hat{a}'(n)) = i$ for every $z \in Z_n$ with $z \restriction {\hat a'(n)} = x_n$. We can then set $x_{n+1}(\hat a'(n)) \neq i$.
        \item There exist $z,y \in Z_n$ such that $z \restriction\hat{a}'(n) = y \restriction \hat{a}'(n) = x_n$ and $z(\hat{a}'(n)) \ne y(\hat{a}'(n))$.
        This case is impossible, for we would have that $\hat{a}'(n) = h(z,y) \in H(Z_n) \subseteq b$, whereas $a' \cap b = \emptyset$.
    \end{enumerate}
    Finally, let $x_{n+1}(k) = a(k)$ for all $k \in \omega$ with $\hat a'(n) < k < \hat a'(n+1)$. Let $x = \bigunion{x_n}{n<\omega}{} \in \dw$, which is well defined, as $x_{n} \subseteq x_{n+1}$ for all $n < \omega$ by construction.
    Assume that for some $k \in \omega$ we have $x(k) = 1$. Then, $x_{n+1}(k) = 1$ for some $n < \omega$ such that $\hat a'(n) \leq k < \hat a' (n+1)$. If $k = \hat a'(n)$, then, since $\hat a'$ is the increasing enumeration of $a$, $a(k) = 1$. Otherwise, by construction, $a(k) = x_{n+1}(k) = 1$ as well. Therefore, $x \subseteq a$ and $x \in \powerset(a)$.
    
    However, for any $n < \omega$ we again have the following two cases.
    \begin{enumerate}
        \item There exists no $z \in Z_n$ such that $z \restriction \hat{a}'(n) = x_n$. In this case, it cannot be that $x \in Z_n$, for $x$ itself would be such a real.
        \item There is an $i \in \set{0,1}$ such that $z(\hat{a}'(n)) = i$ for every $z \in Z_n$ with $z \restriction {\hat a'(n)} = x_n$. 
        In this case, $x \notin Z_n$, for otherwise $x(\hat{a}'(n)) = x_{n+1}(\hat{a}'(n)) \neq z(\hat{a}'(n))$ for some $z \in Z_n$.
        This would mean that $h(x,z) = \hat{a}'(n) \in H(Z_n) \subseteq b$, which would be a contradiction.
    \end{enumerate}
    Therefore, $x \notin Z_n$. We have thus found an $x \in \powerset(a) \setminus \bigunion{Z_n}{n < \omega}{}$, which is a contradiction, as we assumed that $\powerset(a) \subseteq \bigunion{Z_n}{n < \omega}{}$. As a result, it must be that $|a\setminus b| < \aleph_0$ and $a \almostsubseteq b$.
\end{proof}

\begin{Cor} \label{cor Cantor Raisonnier filter}
    $\Raisonnier{\dw}$ is the Fr\'echet filter.
\end{Cor}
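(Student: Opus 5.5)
This corollary should follow directly from Proposition~\ref{prop Raisonnier filter powerset}, applied to $a = \omega$. Here $\omega \in \powersetinfcount{\omega}$ and $\powerset(\omega) = \dw$ under our identification of subsets of $\omega$ with their characteristic functions. The proposition then gives
\[
\Raisonnier{\dw} = \set{ b \in \dw \st \omega \almostsubseteq b }.
\]

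It remains to see that the right-hand side is exactly the Fr\'echet filter. The condition $\omega \almostsubseteq b$ means that $\omega \setminus b$ is finite, i.e.\ that $b$ is co-finite. Conversely, every co-finite $b$ satisfies $\omega \almostsubseteq b$. So the set on the right is precisely the collection of co-finite subsets of $\omega$, which completes the argument.

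There is no real obstacle. The only thing to check is that the hypothesis of Proposition~\ref{prop Raisonnier filter powerset} (namely $a$ infinite) holds for $a=\omega$, which it trivially does. One can also note, as a sanity check, that the inclusion of the Fr\'echet filter in $\Raisonnier{\dw}$ was already observed after Definition~\ref{Raisonnier filter definition}; the substantive content is the reverse inclusion, which comes from the diagonalization in the proof of the proposition.
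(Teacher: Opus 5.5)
Your proposal is correct and is exactly the paper's route: the corollary is stated without proof immediately after Proposition~\ref{prop Raisonnier filter powerset}, as the special case $a=\omega$. In that case $\set{b \in \dw \st \omega \almostsubseteq b}$ is precisely the collection of co-finite subsets of $\omega$, i.e.\ the Fr\'echet filter.
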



It is easy to see that we can avoid any finite number of splitting points when covering a subset of $\dw$, which is why the Raisonnier filter always extends the Fr\'echet filter.
This means that $\Raisonnier{X}$ is determined by the eventual values of the reals in $X$, which is confirmed by the following property.
\smallskip

\begin{Prop} \label{prop Raisonnier filter eventual}
    If $X \subseteq \dw$ and $W = \set{y \in \dw \st \exists x \in X (y \eqev x)}$, then $\Raisonnier{X} = \Raisonnier{W}$.
\end{Prop}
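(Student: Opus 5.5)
Since $X \subseteq W$, Proposition \ref{prop Raisonnier filter subset} already gives $\Raisonnier{W} \subseteq \Raisonnier{X}$, so the whole task is the reverse inclusion $\Raisonnier{X} \subseteq \Raisonnier{W}$. I will obtain it by decomposing $W$ into countably many pieces, each of which is a ``finite modification'' of $X$, and then applying Proposition \ref{prop Raisonnier filter union}.

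For each $s \in 2^{<\omega}$ put
\[
W_s = \set{ s \concat (x \restriction [|s|,\omega)) \st x \in X },
\]
the set of reals obtained from some $x \in X$ by overwriting its first $|s|$ values with $s$. Every $y \in W$ agrees with some $x \in X$ from some $n$ onwards, so $y \in W_{y \restriction n}$. Hence $W = \bigcup_{s} W_s$, a countable union. By Proposition \ref{prop Raisonnier filter union} it then suffices to show that every $a \in \Raisonnier{X}$ lies in $\Raisonnier{W_s}$ for each $s$, taking $a_s = a$ for all $s$ (nonemptiness of the index family is automatic).

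So fix $s$ with $|s| = m$ and $a \in \Raisonnier{X}$, witnessed by a cover $\{Y_n \mid n<\omega\}$ of $X$ with $a \supseteq \bigcup_n H(Y_n)$. Define $\sigma_s(x) = s \concat (x \restriction [m,\omega))$ and set $Y'_n = \sigma_s[Y_n]$. These sets cover $W_s = \sigma_s[X]$. For distinct $y, y' \in Y'_n$ the splitting point satisfies $h(y,y') \geq m$, since both begin with $s$. Write $y = \sigma_s(x)$ and $y' = \sigma_s(x')$ with $x, x' \in Y_n$. Then $x$ and $x'$ agree with $y$ and $y'$ respectively on $[m,\omega)$, so the first disagreement of $x, x'$ at or beyond $m$ is exactly $h(y,y')$. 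The genuine splitting point $h(x,x')$ may, however, be smaller than $m$. Consequently $H(Y'_n)$ need not be contained in $H(Y_n)$, and this is the main obstacle.

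To handle it, I will refine the cover further by fixing the initial segment. For each $t \in 2^m$ let $Y_{n,t} = \set{x \in Y_n \st x \restriction m = t}$. The sets $Y_{n,t}$ still cover $X$, and $H(Y_{n,t}) \subseteq H(Y_n) \subseteq a$. Now put $Y'_{n,t} = \sigma_s[Y_{n,t}]$, which again cover $W_s$. For $x, x' \in Y_{n,t}$ the splitting point satisfies $h(x,x') \geq m$, so $h(\sigma_s(x), \sigma_s(x')) = h(x,x')$. Therefore $H(Y'_{n,t}) = H(Y_{n,t}) \subseteq a$. This is a countable cover of $W_s$ with all splitting points inside $a$, so $a \in \Raisonnier{W_s}$, which completes the argument.
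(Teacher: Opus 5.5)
Your proof is correct and is essentially the paper's argument. Your sets $\sigma_s[Y_{n,t}]$ (with $|t|=|s|$) are exactly the paper's $Z_{n,s,t}=\{x\in[s] : \exists y\in Y_n\cap[t]\ \forall m\geq|t|\,(x(m)=y(m))\}$, and refining by the initial segment $t$ is how the paper ensures $H(Z_{n,s,t})\subseteq H(Y_n)$. The only cosmetic difference is that you first split $W$ into the pieces $W_s$ and invoke Proposition \ref{prop Raisonnier filter union}, whereas the paper uses the whole family $\{Z_{n,s,t}\}$ directly as one countable cover of $W$.
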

\begin{proof}
    Let $X \subseteq \dw$ and $W$ as above. We immediately have that $\Raisonnier{W} \subseteq \Raisonnier{X}$ and it is enough to prove the converse.
    Let $a \in \Raisonnier{X}$ such that $a \supseteq \bigunion{H(Y_n)}{n<\omega}{}$, where $X \subseteq \bigunion{Y_n}{n<\omega}{}$. Consider the following collection of sets: for $n < \omega$ and $s,t \in \dlw$ we define
    \[
        Z_{n,s,t} = \set{x \in \dw \st x \in [s], \, \exists y \in Y_n \cap [t] \, \forall m \geq |t| \, (x(m) = y(m))} \,.
    \]
    Let $x \in W$. By definition, there exists a $y \in X$ such that $x \eqev y$. Since $y \in X$, there exists an $n < \omega$ such that $y \in Y_n$. Moreover, because $x \eqev y$, there exists an $N \in \omega$ such that $x(m) = y(m)$ for all $m \geq N$. Let $s = x\restriction N$ and $t = y\restriction N$. 
    By definition, $x \in [s]$, $y \in Y_n \cap [t]$, $|s| = |t| = N$ and $x \in Z_{n,s,t}$. 
    Therefore,
    \[
        W \subseteq
        \bigcup \set{Z_{n,s,t} \st n < \omega, \, s,t \in \dlw, |s| = |t|} \,.
    \]

    Let now $n < \omega$, $s,t \in \dlw$ such that $|s| = |t| = N$ and $k \in H(Z_{n,s,t})$. By definition, there exist $x,y \in Z_{n,s,t}$ with $x \neq y$ and $h(x,y) = k$. Because $x,y \in [s]$, it must be that $k \geq |s|$. By definition, there exist $x',y' \in Y_n\cap[t]$ such that $x(m) = x'(m)$ and $y(m) = y'(m)$ for all $m \geq N$. Since $x',y'$ share the same initial segment $t$ up to $N$ and are equal to $x$ and $y$ respectively from $N$ onwards, whose splitting point comes after $N$, we have that $k = h(x,y) = h(x',y') \in H(Y_n)$. Therefore, $H(Z_{n,s,t}) \subseteq H(Y_n)$. As a result,
    \[
        a \supseteq \bigunion{H(Y_n)}{n<\omega}{} \supseteq
        \bigcup\set{H(Z_{n,s,t}) \st n<\omega, \, s,t \in \dlw, \, |s| = |t|}
    \]
    and thus $a \in \Raisonnier{W}$. 
\end{proof}

Another intuitive property of the Raisonnier filter concerns shifting reals to the right.
It is clear that appending the same finite sequence in front of all reals of a set $X \subseteq \dw$ would shift their splitting points by the length of that sequence.
As a result, the Raisonnier filter of this new set would simply consist of the elements in the original filter shifted accordingly.
\begin{Def}
    If $a \subseteq \omega$ and $n \in \omega$ we will write $a+n$ for the set $\set{k+n \st k \in a}$.
    If $X \subseteq \dw$ and $s \in \dlw$, we will write $s \concat X$ for the set $\set{s\concat x \st x \in X}$. 
\end{Def}
\begin{Prop} \label{prop Raisonnier concat}
    If $X \subseteq \dw$ and $s \in \dlw$, then $\Raisonnier{s \concat X}$ is generated by the set $\set{a+|s| \st a \in \Raisonnier{X}}$.
\end{Prop}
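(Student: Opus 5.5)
We have to show two inclusions. First, every $a+|s|$ with $a \in \Raisonnier{X}$ lies in $\Raisonnier{s\concat X}$. Second, every $b \in \Raisonnier{s\concat X}$ contains some $a+|s|$ with $a \in \Raisonnier{X}$. Together with upward closure of the filter, these say exactly that $\Raisonnier{s\concat X}$ is the upward closure of $\set{a+|s| \st a \in \Raisonnier{X}}$. Note that the family $\set{a+|s| \st a\in\Raisonnier{X}}$ is closed under finite intersections, since $(a\cap a')+|s| = (a+|s|)\cap(a'+|s|)$. So "generated" really just means upward closure.

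The key observation is that for $x,y \in \dw$ with $x\neq y$ we have $h(s\concat x, s\concat y) = h(x,y)+|s|$. Hence for any $Y\subseteq \dw$,
\[
H(s\concat Y) = H(Y)+|s|.
\]

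For the first inclusion, let $a \in \Raisonnier{X}$, witnessed by a cover $\{Y_n \mid n<\omega\}$ of $X$ with $a\supseteq \bigcup_n H(Y_n)$. Then $\{s\concat Y_n \mid n<\omega\}$ covers $s\concat X$. Its splitting points satisfy $\bigcup_n H(s\concat Y_n) = \bigl(\bigcup_n H(Y_n)\bigr)+|s| \subseteq a+|s|$. So $a+|s| \in \Raisonnier{s\concat X}$.

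For the second inclusion, let $b \in \Raisonnier{s\concat X}$, witnessed by a cover $\{Z_n \mid n<\omega\}$ of $s\concat X$. Replace each $Z_n$ by $Z_n\cap [s]$; this shrinks the splitting-point sets and still covers $s\concat X$. Then write $Z_n\cap[s] = s\concat Y_n$, where $Y_n = \set{x \st s\concat x\in Z_n}$. The $Y_n$ cover $X$, and
\[
H(Y_n)+|s| = H(Z_n\cap[s]) \subseteq H(Z_n) \subseteq b.
\]
Now set $a = \set{k \st k+|s| \in b}$. Then $a \supseteq \bigcup_n H(Y_n)$, so $a\in\Raisonnier{X}$, and $a+|s| = b\setminus |s| \subseteq b$. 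This shows $b$ is above a generator.

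There is no real obstacle here. The only point requiring care is the second inclusion: one must intersect the cover with $[s]$ before pulling it back, so that the $Y_n$ are well defined and the shift identity for $H$ applies.
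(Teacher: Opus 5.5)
Your proof is correct and follows essentially the same route as the paper. You push a cover forward along $x\mapsto s\concat x$ for one inclusion, and for the other you pull a cover of $s\concat X$ back via $\{x : s\concat x\in Z_n\}$ and use $h(s\concat x,s\concat y)=h(x,y)+|s|$. The only cosmetic differences are that you take $a=\{k : k+|s|\in b\}$ where the paper takes $a=\bigcup_n H(Y_n)$ directly, and that you spell out the closure under finite intersections, which the paper leaves implicit.
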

\begin{proof}
    If $a \in \Raisonnier{X}$ is witnessed by the countable cover $\bigunion{Y_n}{n< \omega}{} \supseteq X$ with $a \supseteq \bigunion{H(Y_n)}{n<\omega}{}$, then it is easy to see that $\bigunion{(s \concat Y_n)}{n<\omega}{} \supseteq s \concat Y$ and $a + |s| \supseteq \bigunion{H(s \concat Y_n)}{n<\omega}{}$.

    On the other hand, let $b \in \Raisonnier{s\concat X}$ such that $b \supseteq \bigunion{H(Y_n)}{n<\omega}{}$, where $s\concat X \subseteq \bigunion{Y_n}{n<\omega}{}$. 
    For every $n < \omega$, let $Z_n = \set{x \in X \st s \concat x \in Y_n}$. If $x \in X$, then $s \concat x \in Y_n$ for some $n<\omega$ and $x \in Z_n$. 
    Therefore, $X \subseteq \bigunion{Z_n}{n<\omega}{}$ and $\bigunion{H(Z_n)}{n<\omega}{} \in \Raisonnier{X}$. 
    If now $k \in H(Z_n)$, then there exist $x,y \in X$ such that $s \concat x, s \concat y \in Y_n$, $x \neq y$ and $h(x,y) = k$. We then have that $k+|s| = h(s\concat x, s \concat y) \in H(Y_n)$. As a result, $\left( \bigunion{H(Z_n)}{n<\omega}{} \right)+|s| \subseteq b$. 
\end{proof}

 This Proposition has immediate consequences concerning open subsets of the Cantor space. 
 
\begin{Cor} \label{Raisonnier open corollary}
    The following are true:
    \begin{enumerate}
        \item For every $s \in \dlw$, $\Raisonnier{[s]}$ is the Fr\'echet filter.
        \item For every nonempty open set $X \subseteq \dw$, $\Raisonnier{X}$ is the Fr\'echet filter. 
        \item For every $X \subseteq \dw$ with non-empty interior,  $\Raisonnier{X}$ is the Fr\'echet filter. \label{Raisonnier interior}
    \end{enumerate}
\end{Cor}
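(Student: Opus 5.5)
The three items are proved in order, each feeding the next. Throughout we use the observation that $\Raisonnier{X}$ always contains the Fr\'echet filter. So in each case it suffices to show that every $b \in \Raisonnier{X}$ is cofinite.

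\textbf{Item 1.} Let $s \in \dlw$ and note that $[s] = s\concat \dw$. By Proposition \ref{prop Raisonnier concat}, $\Raisonnier{[s]}$ is generated by the sets $a+|s|$ with $a \in \Raisonnier{\dw}$. By Corollary \ref{cor Cantor Raisonnier filter}, every such $a$ is cofinite. Then $a+|s|$ is cofinite as well: it omits only finitely many numbers above $|s|$, together with those below $|s|$. Since the generators are cofinite, the filter they generate is the Fr\'echet filter. (If ``generated'' is meant as upward closure, supersets of cofinite sets are cofinite, so the conclusion is the same.)

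\textbf{Item 3.} Let $X$ have non-empty interior. Then there is a basic open set $[s] \subseteq X$. By Proposition \ref{prop Raisonnier filter subset}, $\Raisonnier{X} \subseteq \Raisonnier{[s]}$, which is the Fr\'echet filter by item 1. Combined with the reverse inclusion, $\Raisonnier{X}$ is the Fr\'echet filter.

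\textbf{Item 2.} This follows from item 3, since a non-empty open set has non-empty interior.

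The only step requiring any care is the bookkeeping in item 1: checking that shifting a cofinite set by $|s|$ keeps it cofinite, and that the filter generated by cofinite sets contains nothing beyond the cofinite sets. Everything else is a direct application of the earlier propositions.
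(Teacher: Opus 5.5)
Your proposal is correct and follows essentially the same route as the paper. Item 1 comes from Proposition~\ref{prop Raisonnier concat} together with Corollary~\ref{cor Cantor Raisonnier filter}, and the other items come from monotonicity (Proposition~\ref{prop Raisonnier filter subset}) applied to a basic open $[s]$ inside the set; deriving item 2 from item 3, rather than directly, is only a cosmetic reordering.
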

\begin{proof}
    For (1), observe that for any $s \in \dlw$ we have that $[s] = s\concat \dw$. If $a \in \Raisonnier{[s]}$, by Proposition \ref{prop Raisonnier concat}, there exists a $b \in \Raisonnier{\dw}$ such that $b+|s| \subseteq a$. As $\Raisonnier{\dw}$ is the Fr\'echet filter, there exists an $n \in \omega$ such that $m \in b$ for every $m \geq n$ and therefore $m  \in b+|s| \subseteq a$ for every $m \geq n+|s|$ as well, which means that $a$ is cofinite. Thus, every element of $\Raisonnier{[s]}$ is cofinite and $\Raisonnier{[s]}$ is the Fr\'echet filter.
    The other two statements follow directly  using Proposition \ref{prop Raisonnier filter subset}.
\end{proof}

We note that the converse of the above Corollary does not hold. For example, if $a = \set{2n \st n \in \omega}$ and $b = \set{2n+1 \st n \in \omega}$,  the set $X = \powerset(a) \cup \powerset(b)$ has empty interior.
By Propositions \ref{prop Raisonnier filter union}, \ref{prop Raisonnier filter subset} and \ref{prop Raisonnier filter powerset}, $\Raisonnier{X}$ is a subset of both
$\set{c \subseteq \dw \st a \almostsubseteq c}$ and $\set{c \subseteq \dw \st b \almostsubseteq c}$.
As a result, it can only contain cofinite sets and is thus equal to the Fr\'echet filter.

\bigskip We conclude the section with an interesting property of the Raisonnier filter of the constructible reals.
\begin{Prop} \label{prop Raisonnier filter constructibles}
    If $b \in \Raisonnier{\dw \cap L}$ and $n \in \omega$, then $b+n \in \Raisonnier{\dw \cap L}$.
\end{Prop}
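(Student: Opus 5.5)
Let $X = \dw \cap L$. The plan is to use the closure of $X$ under the shift map together with Propositions \ref{prop Raisonnier concat}, \ref{prop Raisonnier filter eventual} and \ref{prop Raisonnier filter subset}. For $s \in \dlw$ with $|s| = n$, the set $s\concat X$ is contained in $X$, since $L$ is closed under finite modifications and prepending. Conversely, every $x \in X$ is eventually equal to $s \concat x'$ for some $x' \in X$, namely the real with $x'(m) = x(m+n)$; this $x'$ is constructible, and in fact $x = (x\restriction n)\concat x'$ exactly. So $X = \bigcup_{|t| = n} t\concat X$, a finite union.

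Now let $b \in \Raisonnier{X}$. First reduce to a single prefix. By Proposition \ref{prop Raisonnier filter eventual}, $\Raisonnier{s\concat X} = \Raisonnier{W}$, where $W$ is the set of reals eventually equal to some element of $s\concat X$. But $W = X$, because $L$ is closed under finite changes and every $x \in X$ is eventually equal to $s \concat x'$ as above. Hence $\Raisonnier{s\concat X} = \Raisonnier{X}$.

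Next apply Proposition \ref{prop Raisonnier concat}. Since $b \in \Raisonnier{X}$, the set $b + n$ lies in the filter generated by $\set{a+n \st a \in \Raisonnier{X}}$, which by that proposition is exactly $\Raisonnier{s\concat X}$. (The easy direction of its proof suffices: if $b \supseteq \bigcup_k H(Y_k)$ with $X \subseteq \bigcup_k Y_k$, then the sets $s\concat Y_k$ cover $s\concat X$ and $b+n \supseteq \bigcup_k H(s\concat Y_k)$.) Therefore $b+n \in \Raisonnier{s\concat X} = \Raisonnier{X}$, as required.

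The only step needing care is the identity $W = X$. One inclusion holds because any finite modification of a constructible real is constructible. For the other, each $x\in X$ equals $s\concat x'$ modulo finite with $x' \in X$, so $x \in W$. I expect no real obstacle; the key input is absoluteness, namely that the tail shift of a constructible real is constructible.
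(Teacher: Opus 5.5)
Your argument is correct, but it takes a different route from the paper's.

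The paper proceeds in three steps. It writes $X=\bigcup_{s\in 2^n} s\concat X$ and applies Proposition~\ref{prop Raisonnier filter union}, so that elements of $\Raisonnier{X}$ are unions $\bigcup_{s} b_s$ with $b_s\in\Raisonnier{s\concat X}$. It then uses Proposition~\ref{prop Raisonnier concat} to see that each $\Raisonnier{s\concat X}$ is the filter $\mathcal G$ generated by the shifts $a+n$. Finally it takes $b_s=b+n$ for every $s$.

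You instead fix a single prefix $s$ and use Proposition~\ref{prop Raisonnier filter eventual}. Since the $=^*$-saturation of $s\concat X$ is $X$ itself, you get $\Raisonnier{s\concat X}=\Raisonnier{X}$ outright, and then only the easy half of Proposition~\ref{prop Raisonnier concat} is needed. Two small remarks on your write-up: the finite decomposition $X=\bigcup_{|t|=n}t\concat X$ that you record is never used. Also, of the identity $W=X$ you only need the inclusion $X\subseteq W$, combined with Proposition~\ref{prop Raisonnier filter subset}.

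As for what each approach buys: the paper's version needs only the two elementary propositions on unions and concatenations, and avoids the cover-refinement argument behind Proposition~\ref{prop Raisonnier filter eventual}. Yours makes the underlying reason more visible. $\Raisonnier{X}$ depends only on the $=^*$-saturation of $X$, and for $X=\dw\cap L$ (or $\dw\cap L[a]$) that saturation is unchanged by prepending a fixed finite string.
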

\begin{proof}
    Let $X = \dw \cap L$. 
    It is easy to see that for any $n \in \omega$ we can write $X = \bigcup\{s \concat X \mid s \in 2^n\}$.
    By Proposition \ref{prop Raisonnier filter union}, we then have that
    \[
        \Raisonnier{X} = \set*{\bigunion{b_s}{s \in 2^n}{} \st \forall s \in 2^n \, (b_s \in \Raisonnier{s \concat X})} \,.
    \]
    By Proposition \ref{prop Raisonnier concat}, for every $s \in 2^n$, $\Raisonnier{s \concat X}$ is the filter $\mathcal{G}$ generated by $\set{b+n \st b \in \Raisonnier{X}}$. Therefore, we can write $\Raisonnier{X}$ as
    \[
        \Raisonnier{X} = \set*{\bigunion{b_s}{s \in 2^n}{} \st \forall s \in 2^n \, (b_s \in \mathcal{G})} \,.
    \]
    As a result, if $b \in \Raisonnier{X}$, then $b + n \in \mathcal{G}$ and thus $b+n \in \Raisonnier{X}$.
\end{proof}
In the case that $|\dw \cap L| = \aleph_1$ and $\Raisonnier{\dw \cap L}$ is not the Fr\'echet filter, it follows that $\Raisonnier{\dw \cap L}$  will contain some very ``sparse'' elements.
This is because for every $a \in \Raisonnier{\dw \cap L}$, $a + n \in \Raisonnier{\dw \cap L}$ as well and so their intersection $a \cap (a+n) = \set{k \in a \st k+n \in a}$ must also be in $\Raisonnier{\dw \cap L}$. The same thing applies for the parametrized constructible reals, $\dw \cap L[a]$.

\section{A characterization of the rapidity of $\F_X$} \label{sectionMAIN}

In this section, we return to Raisonnier's original proof and provide a natural generalization of it in terms of \emph{binary slaloms}. Moreover, it turns out the the crucial step in the proof can be reversed, providing a characterization of the rapidity of the filter $\Raisonnier{X}$ in terms of $X$ being captured by binary slaloms.

Recall that we call $d \in \ww$ a ``partitioning real'' if it is strictly increasing and $d(0)=0$, and  use the notation $I^d_n=[d(n),d(n+1))$.


\begin{Def}[Binary Slaloms]
    Let $d \in \partitioningreals$ and $\varphi \in \ww$. A function $B: \omega \to \powersetfin{\dlw}$ is called a \textit{$d$-binary $\varphi$-slalom} if for every $n$:
    \begin{enumerate}
        \item $|B(n)| \leq \varphi(n)$, and
        \item $B(n) \subseteq 2^{|{I^d_n}|}$,
    \end{enumerate}
    When  $\varphi(n) = n$ we will simply say \emph{$d$-binary slaloms}.
    
   \p  If $B$ is a $d$-binary ($\varphi$-)slalom and $x \in \dw$ then we say that  \textit{$x$ goes through $B$},   and write $x \goesthrough B$, if   $x \restriction{I^d_n} \in B(n)$ for all but finitely many $n \in \omega$. Likewise we say that a set  $X \subseteq \dw$ \textit{goes through} $B$, or that  $B$ \textit{captures $X$},   if $x \in^* B$ for every $x \in X$.

    \p We let $\binaryslaloms{d}{\varphi}$ denote the set of all $d$-binary $\varphi$-slaloms. When $\varphi(n) = n$, we will simply write $\binaryslaloms{d}{}$.
\end{Def}
Binary slaloms place restrictions on the possible segments of reals in the Cantor space, in the same way that slaloms place restrictions on the digits of reals in the Baire space.
With this in mind, we can consider  the following maps between $\ww$ and $\dw$.
\begin{Def}
    Let $\nat: \dlw \to \omega$ be the function mapping a finite sequence in $\dlw$ to the natural number represented by the sequence in the binary system. 
    Given a partitioning real $d \in \partitioningreals$, let $\nat_d: \dw \to \ww$ be the function defined as $\nat_d(x)(n) = \nat(x\restriction{I^d_n})$ for all $x \in \dw$ and $n \in \omega$.
\end{Def}
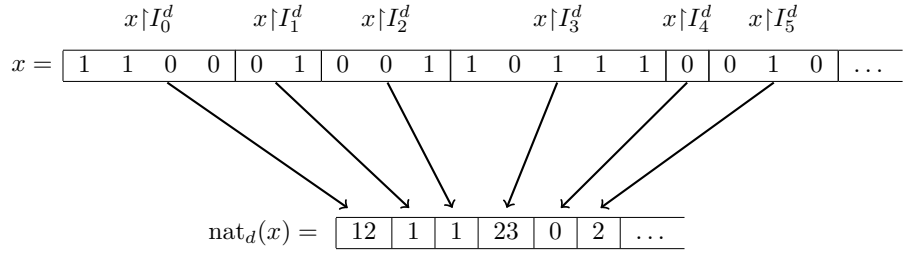
\begin{figure}[h!]
    \centering
    \begin{tikzpicture}[scale=0.95, transform shape]
        \node (S0) {$x\restriction I^d_0$};
        \node (S1) [right =24.05pt of S0] {$x \restriction I^d_1$};
        \node (S2) [right =15.7pt of S1] {$x \restriction I^d_2$};
        \node (S3) [right =41pt of S2] {$x \restriction I^d_3$};
        \node (S4) [right =24.2pt of S3] {$x \restriction I^d_4$};
        \node (S5) [right =7.1pt of S4] {$x \restriction I^d_5$};
        \node (more) [right =21.3pt of S5] {};
        \node (S0set) [below =0pt of S0] {
            \begin{tabular}{|c c c c|}
            \hline
            $1$ & $1$ & $0$ &$0$ \\
            \hline
            \end{tabular}
        };
        \node (S1set) [below =0pt of S1] {
            \begin{tabular}{|c c|}
            \hline
            $0$ &$1$\\ 
            \hline
            \end{tabular}
        };
        \node (S2set) [below =0pt of S2] {
            \begin{tabular}{|c c c|}
            \hline
            $0$&$0$&$1$\\
            \hline
            \end{tabular}
        };
        \node (S3set) [below =0pt of S3] {
            \begin{tabular}{|c c c c c|}
            \hline
            $1$&$0$&$1$&$1$&$1$\\
            \hline
            \end{tabular}
        };
        \node (S4set) [below =0pt of S4] {
            \begin{tabular}{|c|}
            \hline
            $0$\\
            \hline
            \end{tabular}
        };
        \node (S5set) [below =0pt of S5] {
            \begin{tabular}{|c c c|}
            \hline
            $0$&$1$&$0$\\
            \hline
            \end{tabular}
        };
        \node (moreset) [below =5.5pt of more] {
            \begin{tabular}{| c}
                 \hline
                 $\dots$\\ 
                 \hline
            \end{tabular}
        };
        \node (natx contents) at (5, -3) {
            \begin{tabular}{|c|c|c|c|c|c|c}
                \hline
                 $12$ & $1$ & $1$ & $23$ & $0$ & $2$ & $\dots$ \\ 
                \hline
            \end{tabular}
        };
        \node (natx) [left =0pt of natx contents] {$\nat_d(x)= $};
        \node (x) [left =-3pt of S0set] {$x=$};
        \draw[->,thick] (0.25,-0.9) -- (2.75,-2.65);
        \draw[->,thick] (1.75,-0.9) -- (3.6,-2.65);
        \draw[->,thick] (3.3,-0.9) -- (4.2,-2.65);
        \draw[->,thick] (5.65,-0.9) -- (4.95,-2.65);
        \draw[->,thick] (7.45,-0.9) -- (5.7,-2.65);
        \draw[->,thick] (8.65,-0.9) -- (6.25,-2.65);
    \end{tikzpicture}
    \caption{A real $x \in \dw$ translated into $\nat_d(x) \in \ww$, using the partitioning real $d = (0,4,6,9,14,15,18,\dots)$.}
    \label{fig:nat example}
\end{figure}

\begin{Def}
    Given a $k \in \omega$, let $\bin_k: \omega \to \set{0,1}^k$ be the function mapping a natural number $m \in \omega$ to the last $k$ digits of its binary representation (with leading zeroes, if necessary).
    Given a real $d \in \partitioningreals$, let $\bin_d: \ww \to \dw$ be the function mapping a real $f \in \ww$ to the real $\bin_d(f) \in \dw$ such that for all $n \in \omega$, 
    $\bin_d(f)\restriction{I^{{d}}_n} = \bin_{\left|{I^d_n}\right|}(f(n))$.
\end{Def}
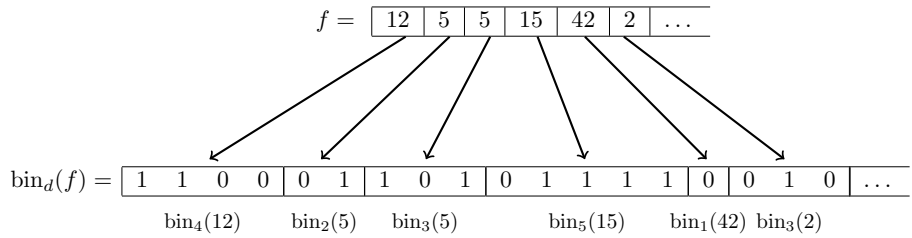
\begin{figure}[h!]
    \centering
    \begin{tikzpicture}[scale=0.89, transform shape]
        \node[scale=0.9, transform shape] (S0) {$\bin_4(12)$};
        \node[scale=0.9, transform shape] (S1) [right =14.5pt of S0] {$\bin_{2}(5)$};
        \node[scale=0.9, transform shape] (S2) [right =8pt of S1] {$\bin_3(5)$};
        \node[scale=0.9, transform shape] (S3) [right =31.5pt of S2] {$\bin_5(15)$};
        \node[scale=0.9, transform shape] (S4) [right =12.25pt of S3] {$\bin_1(42)$};
        \node[scale=0.9, transform shape] (S5) [right =-2.6pt of S4] {$\bin_3(2)$};
        \node[scale=0.9, transform shape] (more) [right =17.5pt of S5] {};
        \node (S0set) [above =0pt of S0] {
            \begin{tabular}{|c c c c|}
            \hline
            $1$ & $1$ & $0$ &$0$ \\
            \hline
            \end{tabular}
        };
        \node (S1set) [above =0pt of S1] {
            \begin{tabular}{|c c|}
            \hline
            $0$ &$1$\\ 
            \hline
            \end{tabular}
        };
        \node (S2set) [above =0pt of S2] {
            \begin{tabular}{|c c c|}
            \hline
            $1$&$0$&$1$\\
            \hline
            \end{tabular}
        };
        \node (S3set) [above =0pt of S3] {
            \begin{tabular}{|c c c c c|}
            \hline
            $0$&$1$&$1$&$1$&$1$\\
            \hline
            \end{tabular}
        };
        \node (S4set) [above =0pt of S4] {
            \begin{tabular}{|c|}
            \hline
            $0$\\
            \hline
            \end{tabular}
        };
        \node (S5set) [above =0pt of S5] {
            \begin{tabular}{|c c c|}
            \hline
            $0$&$1$&$0$\\
            \hline
            \end{tabular}
        };
        \node (moreset) [above =4.5pt of more] {
            \begin{tabular}{| c}
                 \hline
                 $\dots$\\ 
                 \hline
            \end{tabular}
        };
        \node (binf contents) at (5, 3) {
            \begin{tabular}{|c|c|c|c|c|c|c}
                \hline
                 $12$ & $5$ & $5$ & $15$ & $42$ & $2$ & $\dots$ \\ 
                \hline
            \end{tabular}
        };
        \node (f) [left =0pt of binf contents] {$f= $};
        \node (binf) [left =-3pt of S0set] {$\bin_d(f)=$};
        \draw[->,thick] (3,2.775) -- (0.1,0.95);
        \draw[->,thick] (3.65,2.775) -- (1.75,0.95);
        \draw[->,thick] (4.25,2.775) -- (3.3,0.95);
        \draw[->,thick] (4.95,2.775) -- (5.65,0.95);
        \draw[->,thick] (5.65,2.775) -- (7.45,0.95);
        \draw[->,thick] (6.225,2.775) -- (8.6,0.95);
    \end{tikzpicture}
    \caption{A real $f \in \ww$ translated into $\bin_d(x) \in \dw$, using the partitioning real $d = (0,4,6,9,14,15,18,\dots)$.}
    \label{fig:bin example}
\end{figure}


These maps give us a simple way to encode reals in $\ww$ as reals in $\dw$ and vice versa.
With some restriction on the reals to which they are applied, it is clear by their definition that no information is lost when applying them.
In particular, we have the following facts, which will be useful in the next section.
\begin{Lem} \label{lem map facts}
    The following are true for the above maps $\nat_d$ and $\bin_d$ for every $d \in \partitioningreals$:
    \begin{enumerate}
        \item $\bin_d$ is injective on the set $\set{f \in \ww \st \forall n \in \omega \, (f(n) < 2^{|{I^d_n}|})}$;
        \item $\nat_d$ is injective;
        \item $\nat_d(\bin_d(f)) = f$ for every $f \in \ww$ such that $f(n) < 2^{|{I^d_n}|}$ for all $n \in \omega$;
        \item $\bin_d(\nat_d(x)) = x$ for every $x \in \dw$.
    \end{enumerate}
\end{Lem}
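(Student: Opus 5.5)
The plan is to prove all four items directly from the definitions, using one elementary fact about binary representations. For every $k \in \omega$, the map $\bin_k$ restricted to $\set{m \in \omega \st m < 2^k}$ is a bijection onto $\set{0,1}^k$. Its inverse is $\nat$ restricted to $\set{0,1}^k$. So we have
\[
\nat(\bin_k(m)) = m \text{ for } m < 2^k, \qquad \bin_k(\nat(s)) = s \text{ for } s \in \set{0,1}^k .
\]
This holds because $m < 2^k$ has a binary representation using at most $k$ digits. Padding with leading zeroes to length $k$ loses nothing, and conversely every $s \in \set{0,1}^k$ encodes a number $\nat(s) < 2^k$ whose last $k$ binary digits are exactly $s$. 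I will identify $x \restriction I^d_n$ with the element of $\set{0,1}^{|I^d_n|}$ obtained by reindexing from $0$, as the definitions implicitly do.

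I would prove items (3) and (4) first and then derive the injectivity statements from them.

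For (3), let $f$ satisfy $f(n) < 2^{|I^d_n|}$ for all $n$. Then for each $n$ we have
\[
\nat_d(\bin_d(f))(n) = \nat\bigl(\bin_d(f)\restriction I^d_n\bigr) = \nat\bigl(\bin_{|I^d_n|}(f(n))\bigr) = f(n).
\]

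For (4), let $x \in \dw$. Then for each $n$ we have
\[
\bin_d(\nat_d(x))\restriction I^d_n = \bin_{|I^d_n|}\bigl(\nat(x\restriction I^d_n)\bigr) = x\restriction I^d_n .
\]
Since $d(0)=0$ and $d$ is strictly increasing and hence unbounded, the intervals $I^d_n$ partition $\omega$. Therefore two reals agreeing on every $I^d_n$ are equal, which gives $\bin_d(\nat_d(x)) = x$.

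Items (1) and (2) then follow by the standard left-inverse argument. For (2), if $\nat_d(x) = \nat_d(y)$, then applying $\bin_d$ and using (4) gives $x = y$. For (1), if $f, g$ lie in the bounded set and $\bin_d(f) = \bin_d(g)$, then applying $\nat_d$ and using (3) gives $f = g$.

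The only point requiring care is the bijection fact, specifically that $\bin_k$ does not truncate when $m < 2^k$. This is the reason for the boundedness hypothesis in (1) and (3). Everything else is routine unfolding of definitions, so I do not expect a real obstacle.
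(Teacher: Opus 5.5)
Your proposal is correct and follows the paper's approach: the paper's proof is just the observation that $\bin_{|I^d_n|}$ and $\nat$ are mutually inverse between $\{m \in \omega : m < 2^{|I^d_n|}\}$ and $\{0,1\}^{|I^d_n|}$ (so nothing is truncated), with everything else following from the definitions. Your version fills in the details, and deriving the injectivity claims (1) and (2) from the left-inverse identities (3) and (4) is only a tidier organization of that same idea.
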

\begin{proof}
    For (1) and (3), observe that the segments $\bin_d(f) \restriction I^d_n$ correspond exactly to the binary representations of $f(n)$, without any truncation. 
    Statements (2) and (4) follow directly from the definitions.
\end{proof}

We can also translate between slaloms in $\ww$ and binary slaloms in $\dw$ accordingly using the same method.
\begin{Def}
    If $S \in \slaloms{}$ is a slalom and $d \in \partitioningreals$, we write $\bin_d[S]:\omega \to \powersetfin{\dlw}$ for the $d$-binary slalom such that $\bin_d[S](n) = \bin_{\left|{I^d_n}\right|}[S(n)]$ for all $n \in \omega$.
    If $d \in \partitioningreals$ and $B \in \binaryslaloms{d}{}$, we write $\nat[B]: \omega \to \powersetfin{\omega}$ for the slalom such that $\nat[B](n) = \nat[B(n)]$ for all $n \in \omega$.
\end{Def}
It is easy to see that this is well defined.
Moreover, our definitions mean that the property of going through a slalom is preserved by the above maps.
\begin{Lem} \label{lem going through correspondence}
    The following hold.
    \begin{enumerate}
        \item For every $S \in \slaloms{}$ and $d \in \partitioningreals$, if $f \goesthrough S$, then $\bin_d(f) \goesthrough \bin_d[S]$.
        \item For every $d \in \partitioningreals$, $B \in \binaryslaloms{d}{}$ and $x \in \dw$, $x \goesthrough B$ if and only if $\nat(x) \goesthrough \nat[B]$.
    \end{enumerate}
\end{Lem}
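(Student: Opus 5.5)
Both parts of Lemma~\ref{lem going through correspondence} are direct unpackings of the definitions, combined with the finite injectivity and inverse facts behind Lemma~\ref{lem map facts}. In each part I would fix $n$ and compare the relevant $n$-th coordinates. Since both notions of ``going through'' are \emph{for all but finitely many $n$} conditions, a coordinatewise implication or equivalence transfers immediately to the eventual statement.

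For (1), let $S\in\slaloms{}$, $d\in\partitioningreals$, and let $f\goesthrough S$. Then there is $N$ such that $f(n)\in S(n)$ for all $n\ge N$. For such $n$ we have, by definition of $\bin_d$,
\[
\bin_d(f)\restriction I^d_n=\bin_{|I^d_n|}(f(n))\in \bin_{|I^d_n|}[S(n)]=\bin_d[S](n).
\]
Hence $\bin_d(f)\goesthrough\bin_d[S]$. Along the way I would check that $\bin_d[S]$ really is a $d$-binary slalom: its $n$-th entry is the image of $S(n)$ under a function, so it has size at most $|S(n)|\le n$, and it consists of strings of length $|I^d_n|$. No converse is claimed here, and indeed none holds, because $\bin_{|I^d_n|}$ truncates.

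For (2), I read $\nat(x)$ as $\nat_d(x)$. The key point is that $\nat$ restricted to $2^{k}$ is injective for each fixed length $k$, since two distinct strings of the same length represent different binary numbers. Since $B(n)\subseteq 2^{|I^d_n|}$ and $x\restriction I^d_n\in 2^{|I^d_n|}$, injectivity on that set gives
\[
x\restriction I^d_n\in B(n)\iff \nat(x\restriction I^d_n)\in\nat[B(n)],
\]
that is, $x\restriction I^d_n\in B(n)$ if and only if $\nat_d(x)(n)\in\nat[B](n)$. Quantifying over all but finitely many $n$ on both sides yields $x\goesthrough B \iff \nat_d(x)\goesthrough\nat[B]$. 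As in part (1), $\nat[B]$ is a slalom, since $|\nat[B(n)]|\le|B(n)|\le n$.

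The only step requiring care is the backward direction of (2). It depends on restricting $\nat$ to strings of one fixed length; without that, leading zeros would break injectivity. This is exactly why the definition requires $B(n)\subseteq 2^{|I^d_n|}$, and why (2) is an equivalence while (1) is only an implication.
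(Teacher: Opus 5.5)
Your proposal is correct and follows essentially the same route as the paper. Part (1) and the forward direction of (2) are immediate from the definitions. The backward direction of (2) uses exactly the paper's observation: $\nat$ is injective on strings of the fixed length $|I^d_n|$, and this applies because $B(n) \subseteq 2^{|I^d_n|}$. Your extra checks that $\bin_d[S]$ and $\nat[B]$ are genuine slaloms, and your reading of $\nat(x)$ as $\nat_d(x)$, are correct.
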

\begin{proof}
    Both (1) and the left-to-right direction of (2) are immediate from the definitions. For the other direction of (2), if $\nat(x\restriction I^d_n) \in \nat[B](n)$, then there exists some $s \in B(n)$ such that $\nat(x \restriction I^d_n) = \nat(s)$. 
    Because $s \in B(n)$, by definition it is of length $|I^d_n|$, and so is $x\restriction I^d_n$. Since they are of the same length and represent the same natural number in binary, it must be that $x\restriction I^d_n = s \in B(n)$.
\end{proof}

Proving that the Raisonnier filter is rapid for the set $\dw \cap L[a]$ for some parameter $a$ is the focal point of Raisonnier's proof.
As it turns out, this is directly related to binary slaloms.
The result below is a generalization of the method used to prove Raisonnier's Theorem \ref{thm Raisonnier's theorem} in \cite[pp.474-475]{BaJu95}.
\begin{Lem} \label{lem binary slaloms to rapid Raisonnier filter}
    Let $X \subseteq \dw$. If for every partitioning real $d \in \partitioningreals$, $X$ goes through a $d$-binary slalom, then $\Raisonnier{X}$ is rapid.
\end{Lem}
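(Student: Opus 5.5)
The plan is to verify condition (b) of Lemma~\ref{lem rapid filter alternative definition} with $\varphi(n)=n^2$, which is increasing, unbounded and satisfies $\varphi(0)=0$. So fix an arbitrary $f\in\partitioningreals$. The goal is an $a\in\Raisonnier{X}$ with $|a\cap f(n)|\le n^2$ for all $n$. I will take the partitioning real $d$ in the hypothesis to be $f$ itself, and let $B$ be an $f$-binary slalom capturing $X$, so $|B(n)|\le n$ and $B(n)\subseteq 2^{|I^f_n|}$.

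\textbf{Step 1 (the cover).} For $m\in\omega$ and $s\in 2^{f(m)}$ put
\[
Y_{m,s}=\set{x\in[s] \st \forall n\ge m\,(x\restriction I^f_n\in B(n))}.
\]
Every $x\in X$ satisfies $x\goesthrough B$. Hence for some $m$ we have $x\restriction I^f_n\in B(n)$ for all $n\ge m$, and then $x\in Y_{m,x\restriction f(m)}$. So these countably many sets cover $X$, and $a:=\bigcup_{m,s}H(Y_{m,s})\in\Raisonnier{X}$.

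\textbf{Step 2 (locating splitting points).} Let $x\ne y$ lie in the same $Y_{m,s}$ and put $k=h(x,y)$. Since both extend $s$, we have $k\ge f(m)$, so $k\in I^f_n$ for some $n\ge m$. Then $x\restriction I^f_n$ and $y\restriction I^f_n$ are distinct elements of $B(n)$. Moreover, $k-f(n)$ is the first coordinate where these two strings differ. Therefore
\[
a\cap I^f_n\subseteq f(n)+\set{h(\sigma,\tau)\st \sigma\ne\tau\in B(n)},
\]
where $h$ is extended to finite strings of equal length in the obvious way. This bound no longer depends on $m$ or $s$, which is the point of Step 2.

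\textbf{Step 3 (counting).} The splitting points of a finite set $F$ of binary strings of equal length are exactly the branching levels of the finite tree they generate. A finite binary tree with $|F|$ leaves has at most $|F|-1$ branching nodes, so there are at most $|F|-1$ distinct splitting points. Applied to $F=B(n)$, this gives $|a\cap I^f_n|\le\max(n-1,0)\le n$. Consequently
\[
|a\cap f(n)|=\sum_{k<n}|a\cap I^f_k|\le\sum_{k<n}k\le n^2,
\]
as required. By Lemma~\ref{lem rapid filter alternative definition}, $\Raisonnier{X}$ is then rapid.

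The main obstacle is Step 2. One must see that the union over all prefixes $s$ (of which there are exponentially many) does not multiply the number of splitting points. This works because splitting points at or beyond level $f(m)$ are determined solely by the slalom entries $B(n)$, independently of the prefix. The tree-counting bound in Step 3 is routine once this is observed.
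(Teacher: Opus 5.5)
Your proof is correct and follows essentially the same route as the paper. The paper's proof only fixes a $d$-binary slalom capturing $X$ and defers the rest to the Bartoszy\'nski--Judah argument (pp.~474--475); your cover by the prefix-plus-tail sets $Y_{m,s}$, the bound on splitting points in each interval $I^f_n$ via the entries $B(n)$, and the final appeal to the alternative characterization of rapidity are that argument written out in full.
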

\begin{proof}
    Our assumption means that for every $d \in \partitioningreals$ there exists a $B \in \binaryslaloms{d}{}$ such that
    \[
        \forall x \in X \; \forallbutfin n \in \omega \; (x \restriction I^d_n \in B(n)) \,.
    \]
    The rest of the argument proceeds exactly as in \cite[pp.474-475]{BaJu95}.
\end{proof}
The above proposition essentially works because a restriction on the possible segments of reals $x \in X$ directly leads to a restriction on the eventual splitting points necessary in covers of $X$.
This relation however can be reversed: restricting the possible splitting points of a set also leads to a restriction of the possible segments of its elements.
\begin{Lem} \label{lem rapid Raisonnier filter to binary slalom}
    Let $X \subseteq \dw$. If $\Raisonnier{X}$ is rapid and $d \in \partitioningreals$, then $X$ goes through a $d$-binary slalom.
\end{Lem}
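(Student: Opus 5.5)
The plan is to turn a single sparse set $a \in \Raisonnier{X}$, together with the countable cover witnessing $a \in \Raisonnier{X}$, into a $d$-binary slalom. The key observation is a counting bound.

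\emph{Counting bound.} Suppose $Y \subseteq \dw$ and $H(Y) \subseteq a$. Consider the finite tree $T = \set{y\restriction m \st y \in Y, m \leq N}$. A node of $T$ at level $m$ can have two immediate successors only if two elements of $Y$ split exactly at $m$, so only if $m \in a$. Hence
\[
\bigl|\set{y \restriction N \st y \in Y}\bigr| \leq 2^{|a \cap N|} .
\]
Consequently $\bigl|\set{y\restriction I^d_n \st y \in Y}\bigr| \leq 2^{|a\cap d(n+1)|}$ for every $n$.

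\emph{Choosing $a$.} Fix $d \in \partitioningreals$. Put $\varphi(0)=0$ and $\varphi(n) = \floor{\tfrac12 \log_2 n}$ for $n \geq 1$. This $\varphi$ is nondecreasing, unbounded and satisfies $\varphi(0)=0$, which is what the proof of Lemma \ref{lem rapid filter alternative definition} actually uses. We want $a \in \Raisonnier{X}$ with $|a \cap d(n+1)| \leq \varphi(n)$ for all $n$. To get the index shift right, apply Lemma \ref{lem rapid filter alternative definition}(b) to the partitioning real $f(0)=0$, $f(m)=d(m+1)$ for $m\ge1$, with the bound function $\varphi'(m)=\varphi(m-1)$ for $m \geq 1$ and $\varphi'(0)=0$. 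If that lemma is only stated for strictly increasing $\varphi$, its proof goes through verbatim for nondecreasing $\varphi$; alternatively apply it with a slightly smaller strictly-increasing-in-the-limit bound and absorb finitely many exceptions. Any finite exceptions at small $n$ are harmless, because going through a slalom is only required for all but finitely many $n$.

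\emph{Building the slalom.} Since $a \in \Raisonnier{X}$, there are $Y_k$ ($k<\omega$) with $X \subseteq \bigcup_k Y_k$ and $H(Y_k) \subseteq a$ for every $k$. Define $B(0)=\varnothing$ and, for $n \geq 1$,
\[
B(n) = \set{ y\restriction I^d_n \st y \in Y_k,\; k < \floor{\sqrt n} } .
\]
Then $B(n) \subseteq 2^{|I^d_n|}$. By the counting bound,
\[
|B(n)| \leq \floor{\sqrt n}\cdot 2^{\varphi(n)} \leq \sqrt n \cdot \sqrt n = n ,
\]
so $B \in \binaryslaloms{d}{}$. If $x \in X$, pick $k$ with $x \in Y_k$. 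Then for every $n > (k+1)^2$ we have $k < \floor{\sqrt n}$, hence $x\restriction I^d_n \in B(n)$, and so $x \goesthrough B$.

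\emph{Main obstacle.} The only delicate point is the bookkeeping that balances the number of cover pieces admitted at stage $n$ against the branching $2^{|a\cap d(n+1)|}$. Both must fit under the bound $n$. This forces the use of the flexible bound $\varphi$ from Lemma \ref{lem rapid filter alternative definition} with logarithmic growth rather than the default bound $n$, together with the index shift from $d(n)$ to $d(n+1)$ in choosing $a$.
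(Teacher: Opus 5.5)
Your proof is correct and follows the paper's argument essentially step for step. Like the paper, you use Lemma~\ref{lem rapid filter alternative definition} to pick $a \in \Raisonnier{X}$ with $|a\cap d(m+1)|$ of order $\log_2\sqrt{m}$, bound each set $\{x\restriction I^d_m : x \in X_n\}$ by $2^{|a\cap d(m+1)|}\le\sqrt m$ via the splitting-point count, and take $B(m)=\bigcup_{n<\sqrt m}B_n(m)$; the only cosmetic difference is that the paper builds the index shift into the bound $\chi(n)=\lfloor\log_2\lfloor\sqrt{n-1}\rfloor\rfloor$ applied directly to $d$, whereas you shift the partitioning real instead.
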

\begin{proof}
    Let $X \subseteq \dw$ such that $\Raisonnier{X}$ is rapid and let $d \in \partitioningreals$.
    By the rapidity of $\Raisonnier{X}$, there must exist an $a \in \Raisonnier{X}$ such that $|a \cap d(n)| \leq \chi(n)$ for all $n \in \omega$, where
    \[
        \chi(n) =
        \begin{cases}
            0, \, \text{if $n \leq 1$}\\
            \floor{\log_2(\floor{\sqrt{n-1}})}, \, \text{otherwise} \;.
        \end{cases}
    \] 
    By definition then, there exists a countable cover $\sequence{X_n}{n<\omega}$ of $X$ such that $a \supseteq \bigunion{H(X_n)}{n<\omega}{}$.
    Using this, we can define a collection $\sequence{B_n}{n <\omega}$ of $d$-binary $\varphi$-slaloms, where $\varphi(m) = \floor{\sqrt{m}}$ for all $m \in \omega$.
    For any $n < \omega$ let $B_n(0) = \varnothing$ and, for $m \in \omega$ with $m > 0$, let
    \[
        B_n(m) = \set{x\restriction I^d_m \st x \in X_n} \,.
    \]
    This is the set of all possible finite sequences of length $|I^d_m|$ that can occur as the $I^d_m$-sections of reals in $X_n$.
    Any two $x,y \in X_n$ can only diverge at a splitting point $k \in H(X_n)$.
    This means that the set $\set{x \restriction [0,d(m+1)) \st x \in X_n}$, and so $B_n(m)$, can have a cardinality of at most $2^k$, where $k$ is the number of splitting points of $X_n$ below $d(m+1)$.
    As a result, we have that
    \[
        |B_n(m)| \leq 2^{|H(X_n) \cap d(m+1)|}
        \leq 2^{|a \cap d(m+1)|} \leq 2^{\floor{\log_2(\floor{\sqrt{m}})}{}} \leq \sqrt{m}
    \]
    for all $n,m \in \omega$ with $m > 0$.
    
    Let now $B: \omega \to \powersetfin{\dlw}$ such that for every $m \in \omega$, $B(m) = \bigunion{B_n(m)}{n<\sqrt{m}}{}$.
    We have that
    \[
        |B(m)| \leq \sum_{n<\sqrt{m}} |B_n(m)| \leq
        \sum_{n<\sqrt{m}} \sqrt{m} \leq \sqrt{m}^2 = m
    \]
    Moreover, if $s \in B(m)$, then there exists some $n < \sqrt{m}$ such that $s \in B_n(m)$, in which case, by construction, $s \in \set{0,1}^{|I^d_m|}$.
    Therefore, $B$ is a $d$-binary slalom. Finally, if $x \in X$, then there exists an $n < \omega$ such that $x \in X_n$. 
    Then, for every $m \in \omega$ with $m > 0$, $x\restriction I^d_m \in B_n(m)$ by definition and so, for every $m > n^2$, $x\restriction I^d_m \in B_n(m) \subseteq B(m)$.
    This means that $x \goesthrough B$ and we have thus shown that $X$ goes through the $d$-binary slalom $B$.
\end{proof}
Combining Lemmas \ref{lem binary slaloms to rapid Raisonnier filter} and \ref{lem rapid Raisonnier filter to binary slalom} we obtain the following theorem, which gives us a clearer characterization of what it means for a set to have a rapid Raisonnier filter.
\begin{Thm} \label{thm Raisonnier rapid filter characterization}
    The following are equivalent for every $X \subseteq \dw$:
    \begin{enumerate}
        \item $\Raisonnier{X}$ is rapid,
        \item for every $d \in \partitioningreals$, $X$ goes through a $d$-binary slalom, and
        \item for every $d \in \partitioningreals$, $\nat_d[X]$ goes through a slalom.
    \end{enumerate}
\end{Thm}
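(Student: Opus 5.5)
The equivalence (1)$\Leftrightarrow$(2) is already in hand: Lemma \ref{lem binary slaloms to rapid Raisonnier filter} gives (2)$\Rightarrow$(1) and Lemma \ref{lem rapid Raisonnier filter to binary slalom} gives (1)$\Rightarrow$(2). So the plan is simply to quote these two lemmas and then show (2)$\Leftrightarrow$(3) by translating between binary slaloms and ordinary slaloms with the maps $\nat$, $\nat_d$ and $\bin_d$.

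For (2)$\Rightarrow$(3), fix $d \in \partitioningreals$ and take $B \in \binaryslaloms{d}{}$ capturing $X$. The candidate slalom is $\nat[B]$, with $\nat[B](n) = \nat[B(n)]$, so $|\nat[B](n)| \le |B(n)| \le n$ and $\nat[B]$ is a slalom. For $x \in X$ we have $x \goesthrough B$, and Lemma \ref{lem going through correspondence}(2) gives $\nat_d(x) \goesthrough \nat[B]$ (the lemma writes $\nat(x)$, which here means $\nat_d(x)$). Hence $\nat_d[X]$ goes through $\nat[B]$.

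For (3)$\Rightarrow$(2), fix $d$ and a slalom $S$ capturing $\nat_d[X]$. Define $B(n) = \bin_d[S](n) = \bin_{|I^d_n|}[S(n)]$. Then $B(n) \subseteq 2^{|I^d_n|}$ and $|B(n)| \le |S(n)| \le n$, so $B \in \binaryslaloms{d}{}$. For $x \in X$, $\nat_d(x) \goesthrough S$, so Lemma \ref{lem going through correspondence}(1) gives $\bin_d(\nat_d(x)) \goesthrough B$. By Lemma \ref{lem map facts}(4), $\bin_d(\nat_d(x)) = x$, so $x \goesthrough B$.

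There is no real obstacle once the two lemmas are available. The only point needing care is that $\bin_{|I^d_n|}$ truncates large numbers, which could look like a loss of information. It causes no trouble: truncation only shrinks the image, so the cardinality bound still holds, and the reals actually in play, $\nat_d(x)$, satisfy $\nat_d(x)(n) < 2^{|I^d_n|}$ automatically, so Lemma \ref{lem map facts} applies to them exactly.
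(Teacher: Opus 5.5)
Your proof is correct and is essentially the paper's proof. The paper likewise gets (1)$\Leftrightarrow$(2) from Lemmas \ref{lem binary slaloms to rapid Raisonnier filter} and \ref{lem rapid Raisonnier filter to binary slalom}, and (2)$\Leftrightarrow$(3) from Lemmas \ref{lem going through correspondence} and \ref{lem map facts}; you have simply written out that translation via $\nat[B]$ and $\bin_d[S]$, and your reading of $\nat(x)$ as $\nat_d(x)$ in Lemma \ref{lem going through correspondence} is right.
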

\begin{proof}
    Statements (1) and (2) are equivalent by Lemmas \ref{lem binary slaloms to rapid Raisonnier filter} and \ref{lem rapid Raisonnier filter to binary slalom}.
    Statements (2) and (3) are equivalent by Lemmas \ref{lem going through correspondence} and \ref{lem map facts}.
\end{proof}

\section{A partial converse to Raisonnier's theorem} \label{sectionCONVERSE}

Since Theorem \ref{thm Raisonnier rapid filter characterization} equates a subset of $\dw$ having a rapid Raisonnier filter to its going through a slalom for every $d \in \partitioningreals$, it is natural to ask whether a converse to Raisonnier's Theorem \ref{thm Raisonnier's theorem} can be obtained.
While it is not clear whether a true converse is possible, a partial converse can easily be proven with an additional assumption.

In the proof of Raisonnier's theorem, reals in $\dw \cap L[a]$ are essentially encoded by elements of $\ww \cap L[a,d]$ for some $d \in \partitioningreals$.
Here, we would like to encode the elements of $\ww \cap L[a]$ by constructible $x \in \dw \cap L[a,d]$.
For this however, we need some upper bound on the binary digits needed for the values of any $f \in \ww \cap L[a]$, so that we can always choose a partitioning $d \in \partitioningreals$ for which applying $\bin_d$ results in no loss of information.
This is possible if $\ww \cap L[a]$ is dominated by a real $d \in \ww$ and the below definition give ones such partition.
\begin{Def}
    Given $d \in \ww$ such that $d(n) > 0$ for all $n \in \omega$, let $\partreal{d} \in \partitioningreals$ denote the real such that $\partreal{d}(n) = \sum_{m < n}d(m)$ for all $n \in \omega$.
\end{Def}
We can now use the method described above to prove the following result.
\begin{Lem} \label{lem rapidideal and domination to slaloms}
    Assume that
    \begin{enumerate}
        \item for every $a \in \ww$, $\Raisonnier{\dw \cap L[a]}$ is rapid, and that
        \item for every $a \in \ww$ there exists a $d \in \ww$ dominating $\ww \cap L[a]$.
    \end{enumerate}
    Then, for every $a \in \ww$ there exists a slalom $S \in \slaloms{}$ such that $\ww \cap L[a]$ goes through $S$.
\end{Lem}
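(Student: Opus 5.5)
Fix $a \in \ww$. By assumption (2) we pick $d \in \ww$ dominating $\ww \cap L[a]$. We may enlarge $d$ so that $d(n) > 0$ for all $n$; indeed we take $d'(n) = \max(d(n),1)+1$, which still dominates. We then form the partitioning real $\partreal{d}$, so that $|I^{\partreal{d}}_n| = d(n)$. The idea is to code each $f \in \ww \cap L[a]$ by $\bin_{\partreal{d}}(f)$. This real lies in $\dw \cap L[a,d]$, or more precisely in $L[a']$ for a single real $a'$ coding both $a$ and $d$. We then apply assumption (1) to the parameter $a'$.

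The first step handles the loss of information. For $f \in \ww \cap L[a]$ we have $f \leq^* d$, so for all but finitely many $n$ we get $f(n) \le d(n) < 2^{d(n)} = 2^{|I^{\partreal{d}}_n|}$. On those $n$, $\nat(\bin_{\partreal{d}}(f)\restriction I^{\partreal{d}}_n) = f(n)$ by the reasoning of Lemma \ref{lem map facts}(3). So $\nat_{\partreal{d}}(\bin_{\partreal{d}}(f)) \eqev f$.

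The second step applies (1). Let $a'\in\ww$ code $(a,d)$. Then $\bin_{\partreal{d}}(f) \in \dw \cap L[a']$ for every $f \in \ww\cap L[a]$, because $\bin_{\partreal{d}}$ is absolutely defined from $d$. By (1), $\Raisonnier{\dw \cap L[a']}$ is rapid. Theorem \ref{thm Raisonnier rapid filter characterization}, applied with the partition $\partreal{d}$, then yields a $\partreal{d}$-binary slalom $B$ capturing $\dw \cap L[a']$; alternatively we can use Lemma \ref{lem rapid Raisonnier filter to binary slalom} directly. By Lemma \ref{lem going through correspondence}(2), $\nat_{\partreal{d}}(x) \goesthrough \nat[B]$ for every such $x$, and $\nat[B]$ is a slalom since $|\nat[B](n)| \le |B(n)| \le n$.

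Finally, set $S = \nat[B]$. For $f \in \ww\cap L[a]$ let $x = \bin_{\partreal{d}}(f)$. Then $\nat_{\partreal{d}}(x) \in^* S$ and $\nat_{\partreal{d}}(x) \eqev f$, so $f \in^* S$. The only delicate point is the first step: domination holds only eventually, so we must check that the finitely many truncated coordinates cannot affect $\in^*$. This is harmless because going through a slalom is itself an eventual notion. We also need to be careful that the parameter is upgraded from $a$ to $a'$, which is legitimate because (1) is assumed for every real.
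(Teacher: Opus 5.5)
Your proof is correct and follows essentially the same route as the paper: you code $\ww\cap L[a]$ into $\dw\cap L[a,d]$ via $\bin_{\partreal{d}}$, apply the rapidity-to-slalom characterization with the partition $\partreal{d}$, and pull the slalom back. The differences are only cosmetic. You work with $\bin_{\partreal{d}}(f)$ directly and use $\nat_{\partreal{d}}(\bin_{\partreal{d}}(f)) \eqev f$, whereas the paper first replaces $f$ by an everywhere-bounded $g \eqev f$. You also obtain the slalom as $\nat[B]$ from a binary slalom $B$ rather than citing item (3) of Theorem~\ref{thm Raisonnier rapid filter characterization}.
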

\begin{proof}
    Let $a \in \ww$ and let $\ww \cap L[a]$ be dominated by $d \in \ww$.
    Without loss of generality, we can assume that $d(n) > 0$ for all $n \in \omega$.
    Since $\Raisonnier{\dw \cap L[a,d]}$ is rapid by assumption, by Theorem \ref{thm Raisonnier rapid filter characterization}, $\nat_{\partreal{d}}[\dw \cap L[a,d]]$ goes through a slalom $S \in \slaloms{}$.

    Let $f \in \ww \cap L[a]$. By assumption, $f \leqev d$ and it is easy to see that there exists a $g \in \ww$ such that $f \eqev g$ and $g(n) \leq d(n)$ for all $n \in \omega$.
    Since $f$ and $g$ differ in only finitely many values, $g \in L[a] \subseteq L[a,d]$ as well.
    Because $g(n) \leq d(n)$, for every $n \in \omega$, we can write $g(n)$ in the binary system using at most $d(n)$ digits.
    In other words, we have that $g(n) \leq d(n) < 2^{d(n)} = 2^{|{I^{\partreal{d}}_n}|}$ for all $n \in \omega$.
    Since $\partreal{d}$ is definable given $d$, $\bin_{\partreal{d}}(g) \in \dw \cap L[a,d]$.
    By Lemma \ref{lem map facts}, we have that
    \[
        g = \nat_{\partreal{d}}(\bin_{\partreal{d}}(g)) \in \nat_{\partreal{d}}[\dw \cap L[a,d]]
    \]
    and thus $g \goesthrough S$. Because $f \eqev g$, it must be that $f \goesthrough S$ as well.
    Therefore, $\ww \cap L[a]$ goes through the slalom $S$.
\end{proof}

Using the characterization of $\ProjectiveSigma{2}$ Lebesgue measurability from Theorem \ref{thm slalom equivalence} we obtain the following result.
\begin{Thm} \label{thm rapid ideal and domination characterization of Sigma12 measurability}
    The following are equivalent.
    \begin{enumerate}
        \item Every $\ProjectiveSigma{2}$ set is Lebesgue measurable.
        \item For every $a \in \ww$, $\ww \cap L[a]$ is dominated by a real and $\Raisonnier{\dw \cap L[a]}$ is rapid.
    \end{enumerate}
\end{Thm}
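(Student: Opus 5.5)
The plan is to prove the two directions separately, using Theorem \ref{thm slalom equivalence} to translate ``every $\ProjectiveSigma{2}$ set is Lebesgue measurable'' into the slalom condition (c). Throughout, $a$ ranges over $\ww$.

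For $(1)\Rightarrow(2)$, rapidity of $\Raisonnier{\dw \cap L[a]}$ is exactly Raisonnier's Theorem \ref{thm Raisonnier's theorem}, so only the domination clause needs an argument. By Theorem \ref{thm slalom equivalence}(c), fix a slalom $S$ with $\ww \cap L[a] \subseteq^* S$. Define $d(n) = \max S(n) + 1$, or $d(n)=1$ when $S(n)$ is empty. Let $f \in \ww \cap L[a]$. For almost all $n$ we have $f(n) \in S(n)$, hence $f(n) < d(n)$. Therefore $f \leqev d$, and $d$ dominates $\ww \cap L[a]$. An alternative route would be the standard fact that measure-one-many random reals over $L[a]$ give $\add(\nullideal)$-type consequences, but the direct slalom argument is simpler and self-contained.

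For $(2)\Rightarrow(1)$, the hypotheses in (2) are precisely the two assumptions of Lemma \ref{lem rapidity...}, i.e. Lemma \ref{lem rapidideal and domination to slaloms}. That lemma yields, for every $a \in \ww$, a slalom $S$ such that $\ww \cap L[a]$ goes through $S$. This is condition (c) of Theorem \ref{thm slalom equivalence}, so every $\ProjectiveSigma{2}$ set is Lebesgue measurable.

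One point needs care in Lemma \ref{lem rapidideal and domination to slaloms}: it uses rapidity for the parameter $L[a,d]$ rather than $L[a]$. Since (2) is quantified over all reals, we pair $a$ and $d$ into a single real; coding a pair in $\ww$ is routine, so the hypothesis applies. No real obstacle is expected. The only mildly nontrivial step is the domination argument in the forward direction, namely that a slalom capturing $\ww\cap L[a]$ bounds it. That step is immediate because slalom values are finite sets.
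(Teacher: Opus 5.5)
Your proposal is correct and follows the paper's proof: for the forward direction you use Raisonnier's Theorem~\ref{thm Raisonnier's theorem} for rapidity and get the dominating real from the slalom characterization (your explicit $d(n)=\max S(n)+1$ spells out what the paper calls immediate), and for the backward direction you combine Lemma~\ref{lem rapidideal and domination to slaloms} with Theorem~\ref{thm slalom equivalence}. Your remark about coding $(a,d)$ as a single real, so that the hypothesis applies to $L[a,d]$, correctly fills in a detail the paper leaves implicit.
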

\begin{proof}
    Assuming (1) immediately leads to $\ww \cap L[a]$ being dominated by some real for every parameter $a \in \ww$. It also leads to $\Raisonnier{\dw \cap L[a]}$ being rapid for every $a \in \ww$ by Raisonnier's Theorem \ref{thm Raisonnier's theorem}.
    The other direction is Lemma \ref{lem rapidideal and domination to slaloms}.
\end{proof}

The above theorem is essentially a statement about three \textit{transcendence properties} over the set of constructible reals, which state that it is very small in some sense, when compared to $\ww$.
In their article \cite{BrLo99} Brendle and L\"owe prove that the transcendence property of $\ww \cap L[a]$ being dominated by a real is connected to the notion of \textit{Laver measurability}, related to Laver forcing.
Because of this, Theorem \ref{thm rapid ideal and domination characterization of Sigma12 measurability} directly implies the following.
\begin{Cor}
    The following are equivalent:
    \begin{enumerate}
        \item Every $\ProjectiveSigma{2}$ set is Lebesgue measurable.
        \item Every $\ProjectiveSigma{2}$ set is Laver measurable and $\Raisonnier{\dw \cap L[a]}$ is rapid for all $a \in \ww$.
    \end{enumerate}
\end{Cor}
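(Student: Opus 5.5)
The plan is to deduce the corollary from Theorem \ref{thm rapid ideal and domination characterization of Sigma12 measurability}. For that we need the Brendle--L\"owe characterization from \cite{BrLo99}: every $\ProjectiveSigma{2}$ set is Laver measurable if and only if for every $a \in \ww$ there is a real $d \in \ww$ dominating $\ww \cap L[a]$. We treat this as a black-box transcendence characterization, in the same spirit as Theorem \ref{thm slalom equivalence}. With it, condition (2) of the corollary becomes, verbatim, condition (2) of Theorem \ref{thm rapid ideal and domination characterization of Sigma12 measurability}.

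For the direction (1) $\Rightarrow$ (2), assume all $\ProjectiveSigma{2}$ sets are Lebesgue measurable. Theorem \ref{thm rapid ideal and domination characterization of Sigma12 measurability} then gives, for each $a$, that $\ww \cap L[a]$ is dominated by a real and that $\Raisonnier{\dw \cap L[a]}$ is rapid. The first conclusion, by Brendle--L\"owe, yields $\ProjectiveSigma{2}$ Laver measurability. One can also see the domination directly: by Theorem \ref{thm slalom equivalence}(c), $\ww\cap L[a]$ goes through a slalom $S$, and then $n\mapsto \max S(n)+1$ (with $\max\varnothing=0$) dominates it.

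For the direction (2) $\Rightarrow$ (1), Laver measurability of $\ProjectiveSigma{2}$ sets gives, by Brendle--L\"owe, a dominating real over each $L[a]$. Together with the assumed rapidity, condition (2) of Theorem \ref{thm rapid ideal and domination characterization of Sigma12 measurability} holds, and hence all $\ProjectiveSigma{2}$ sets are Lebesgue measurable.

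The only real obstacle is making sure the Brendle--L\"owe equivalence is stated for the boldface, parametrized version, i.e.\ for all $a$ simultaneously with the $\ProjectiveSigma{2}$ class. Their theorem is stated in that form, paralleling Solovay's characterization, so no further work should be needed beyond citing it.
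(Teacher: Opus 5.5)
Your proposal is correct and is exactly the paper's argument: the corollary follows from Theorem~\ref{thm rapid ideal and domination characterization of Sigma12 measurability} once you substitute the Brendle--L\"owe characterization \cite[Theorem 4.1]{BrLo99}, which says that $\SIGMA^1_2$ Laver measurability holds iff every $\ww \cap L[a]$ is dominated by a real. Your side remark that a slalom capturing $\ww \cap L[a]$ directly yields a dominating real is a correct elaboration of the step the paper calls immediate.
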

\begin{proof}
    The equivalence between $\ProjectiveSigma{2}$ Laver measurability and the statement ``for every $a \in \ww$, $\ww \cap L[a]$ is dominated by a real'' corresponds to Theorem 4.1 of \cite{BrLo99}.
\end{proof}

\section{The Raisonnier Ideal and its cardinal characteristics} \label{sectionCARDINAL}

Our intuition about the Raisonnier filter implies that sets $X \subseteq \dw$ with a rapid $\Raisonnier{X}$ must be quite small, as we always find countable covers of them whose splitting points can be arbitrarily far apart.
As it turns out, the collection of these sets forms a $\sigma$-ideal.
\begin{Def}
    Let $\rapidideal$ denote the collection $\set{X \subseteq \dw \st \text{$\Raisonnier{X}$ is rapid}}$.
\end{Def}
\begin{Remark}
    For this definition, we also consider the trivial filter $\powerset(\omega)$ to be rapid, as it trivially satisfies Definition \ref{def rapid filter}.
\end{Remark}

\begin{Prop}
    The collection $\rapidideal$ is a $\sigma$-ideal.
\end{Prop}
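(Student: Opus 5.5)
We must show that $\rapidideal$ contains $\varnothing$, is closed under subsets, and is closed under countable unions. The cleanest route goes through the characterization in Theorem \ref{thm Raisonnier rapid filter characterization}: $X \in \rapidideal$ if and only if for every $d \in \partitioningreals$ the set $X$ goes through some $d$-binary slalom. In this form all three closure properties become statements about binary slaloms, which are easy to combine.

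\textbf{Emptiness and subsets.} The empty set goes through every $d$-binary slalom, for instance the one with $B(n) = \varnothing$. Alternatively, $\Raisonnier{\varnothing}$ is the trivial filter, which counts as rapid by the Remark. For heredity, note that if $Y \subseteq X$ and $X$ goes through $B$, then so does $Y$. One can also argue via Proposition \ref{prop Raisonnier filter subset}: $\Raisonnier{X} \subseteq \Raisonnier{Y}$, and rapidity is clearly inherited by larger filters.

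\textbf{Countable unions.} Let $X = \bigcup_{k<\omega} X_k$ with each $X_k \in \rapidideal$, and fix $d \in \partitioningreals$. By Lemma \ref{lem rapid Raisonnier filter to binary slalom} applied with the smaller bound, I would like each $X_k$ to go through a $d$-binary $\varphi$-slalom $B_k$ with $\varphi(m) = \floor{\sqrt m}$.

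To get this, I would first observe that the proof of Lemma \ref{lem rapid Raisonnier filter to binary slalom} actually works for any prescribed unbounded increasing bound. Using Lemma \ref{lem rapid filter alternative definition}, one chooses $\chi$ small enough, e.g.\ $\chi(n) = \floor{\log_2 \floor{\sqrt{\floor{\sqrt{n-1}}}}}$, and repeats the same summation argument. This yields a $d$-binary $\floor{\sqrt m}$-slalom capturing $X_k$.

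Then set
\[
B(m) = \bigcup_{k < \sqrt m} B_k(m).
\]
This gives $|B(m)| \le \sqrt m \cdot \sqrt m = m$, so $B$ is a $d$-binary slalom. Each $x \in X_k$ satisfies $x \restriction I^d_m \in B_k(m) \subseteq B(m)$ for all $m > k^2$ beyond the finite exceptions for $B_k$. Hence $X$ goes through $B$, and by Lemma \ref{lem binary slaloms to rapid Raisonnier filter}, $\Raisonnier{X}$ is rapid.

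\textbf{Main obstacle.} The only delicate step is obtaining slaloms with the sublinear bound $\sqrt m$ rather than $m$, so that the diagonal union stays within the bound $m$. I would handle this by re-running the proof of Lemma \ref{lem rapid Raisonnier filter to binary slalom} with a smaller $\chi$, as above.

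An alternative that avoids this is to pass to a coarser partition. Let $d'(m) = d(e(m))$, where the blocks of $d'$ group many $d$-intervals together. A $d'$-binary slalom with $m$ entries restricts to each constituent $d$-interval with at most $m$ entries. Then an $m$-slalom on the coarse blocks can be converted into a $\sqrt{\cdot}$-style bound on the fine intervals. Either approach suffices.
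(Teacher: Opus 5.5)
Your proof is correct, but it takes a different route from the paper. The paper stays on the filter side throughout. For a fixed $f \in \partitioningreals$ it picks $a_n \in \Raisonnier{X_n}$ with $|a_n \cap f(m)| \le m$ for all $m$, and truncates to $b_n = \{k \in a_n \mid k \ge f(n)\}$; these are still in $\Raisonnier{X_n}$ because the filters extend the Fr\'echet filter. It then takes $b = \bigcup_n b_n \in \Raisonnier{X}$ by Proposition~\ref{prop Raisonnier filter union}. Since only $b_0,\dots,b_{n-1}$ can meet $f(n)$, one gets $|b \cap f(n)| \le n^2$, and Lemma~\ref{lem rapid filter alternative definition} with $\varphi(n)=n^2$ finishes.

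You instead pass to binary slaloms through Theorem~\ref{thm Raisonnier rapid filter characterization} and merge those. It is the same diagonal idea, but the slack is handled at opposite ends. The paper absorbs the quadratic loss afterwards via Lemma~\ref{lem rapid filter alternative definition}. You must create room beforehand by producing $\lfloor\sqrt m\rfloor$-slaloms, because the characterization only speaks of the bound $m$. That room has to come from the full hypothesis rather than from a single partition. For a fixed $d$ (say with $|I^d_n| = n+1$), the family of sets captured by one $d$-binary slalom is not even closed under finite unions.

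Of your two fixes, the coarsening is the cleaner one, since it uses Lemma~\ref{lem rapid Raisonnier filter to binary slalom} as a black box. With $d'(m) = d(m^2)$, each $I^d_j$ with $m^2 \le j < (m+1)^2$ lies inside $I^{d'}_m$. So restricting a $d'$-binary slalom yields a $d$-binary slalom with at most $m \le \lfloor\sqrt j\rfloor$ entries at $j$.

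The paper's route buys economy: it needs only Proposition~\ref{prop Raisonnier filter union} and Lemma~\ref{lem rapid filter alternative definition}. It does not need Lemma~\ref{lem binary slaloms to rapid Raisonnier filter}, whose proof is deferred to Bartoszy\'nski--Judah. Your route buys a uniform picture, placing the $\sigma$-ideal property inside the slalom framework that drives Sections~\ref{sectionCONVERSE} and~\ref{sectionCARDINAL}.

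Two small repairs. First, there are $\lceil\sqrt m\,\rceil$ indices $k<\sqrt m$, so your count is really $\lceil\sqrt m\,\rceil\lfloor\sqrt m\rfloor$, which already exceeds $m$ for $m=10$. Take the union over $k<\lfloor\sqrt m\rfloor$ instead; the paper's proof of Lemma~\ref{lem rapid Raisonnier filter to binary slalom} contains the same slip. Second, define $\chi(n)=0$ for small $n$ so that $\log_2 0$ never occurs.
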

\begin{proof}
    First, it is easy to see that $\Raisonnier{\emptyset}$ is the trivial filter.
    Let $X,Y \in \rapidideal$. By Proposition \ref{prop Raisonnier filter union}, 
    \[
        \Raisonnier{X \cup Y} = \set{ a \cup b \st a \in \Raisonnier{X}, \, b \in \Raisonnier{Y} } \;.
    \]
    For any $f \in \partitioningreals$, by the assumption that $\Raisonnier{X}$ and $\Raisonnier{Y}$ are rapid, there exist $a \in \Raisonnier{X}$ and $b \in \Raisonnier{Y}$ such that $|a \cap f(n)| \leq n$ and $|b \cap f(n)| \leq n$ for every $n \in \omega$. As a result, for every $n \in \omega$,
    \[
        |(a\cup b) \cap f(n)| \leq
        |a \cap f(n)| + |b \cap f(n)| \leq 2n \,.
    \]
    Since $a \cup b \in \Raisonnier{X \cup Y}$, we have a witness for $f \in \ww$ needed by Proposition \ref{lem rapid filter alternative definition} and thus $\Raisonnier{X \cup Y}$ is rapid, meaning that $X \cup Y \in \rapidideal$.
    If now $Y \subseteq X \in \rapidideal$, then for any $f \in \ww$ there exists an $a \in \Raisonnier{X} \subseteq \Raisonnier{Y}$ such that $|a \cap f(n)| \leq n$ for all $n \in \omega$, meaning that $Y \in \rapidideal$ as well.

    Finally, let $\set{X_n \in \rapidideal \st n < \omega}$ be a countable collection of sets and let $X = \bigunion{X_n}{n<\omega}{}$.
    Let $f \in \partitioningreals$. 
    As every $\Raisonnier{X_n}$ is rapid, for every $n < \omega$ there exists an $a_n \in \Raisonnier{X_n}$ such that $|a_n \cap f(m)| \leq m$ for all $m \in \omega$. 
    Moreover, as every $\Raisonnier{X_n}$ extends the Fr\'echet filter, we have that for all $n < \omega$,
    \[
        b_n = \{ k \in a_n \mid k \geq f(n) \} \in \Raisonnier{X_n} \,.
    \]
    By Proposition \ref{prop Raisonnier filter union}, $b = \bigcup_{n < \omega}b_n \in \Raisonnier{X}$, and
    \[
        |b \cap f(n)| = \left| \left( \bigcup_{m<\omega}b_m \right) \cap f(n) \right| = \left| \bigcup_{m<\omega}(b_m \cap f(n)) \right| \leq
        \sum_{m<\omega}|b_m \cap f(n)|\,.
    \]
    However, for any $m \geq n$ we have that
    \[
        b_m \cap f(n) = \{ k \in a_m \mid k \geq f(m) \geq f(n) \} \cap f(n) = \varnothing \,.
    \]
    Therefore, for every $n \in \omega$,
    \[
        |b \cap f(n)| \leq \sum_{m = 0}^{n-1} |b_m \cap f(n)| \leq \sum_{m=0}^{n-1} |a_m \cap f(n)| \leq \sum_{m=0}^{n-1} n = n^2 \,.
    \]
    Consequently, $\Raisonnier{X}$ is rapid and $X \in \rapidideal$.
\end{proof}

As a result, $\rapidideal$ is a $\sigma$-ideal extending the ideal $\countableideal$ of countable sets.
It is not provable in $\ZFC$, however, whether $\rapidideal \supset \countableideal$. 
Assuming $\ProjectiveSigma{2}$ Lebesgue measurability and the existence of an $a \in \ww$ such that $(\aleph_1^{L[a]} < \aleph_1)$ implies that $\dw \cap L[a] \in \rapidideal \setminus \countableideal$.
On the other hand, Judah and Shelah \cite{judah1991} have shown the existence of a model without any nontrivial rapid filters, where it must then be that $\rapidideal = \countableideal$.

Theorem \ref{thm rapid ideal and domination characterization of Sigma12 measurability} suggests that there is also a relation between the ideal $\nullideal$ and $\rapidideal$.
This is easy to achieve through the characterization of $\add(\nullideal)$ and $\cof(\nullideal)$ through slaloms and the method used to prove Lemma \ref{lem rapidideal and domination to slaloms}.
In particular, we obtain the following.

\begin{Lem} \label{lem add(N) is at least min of non(R) and bound}
    $\add(\nullideal) \geq \min\set{\non(\rapidideal), \boundingnum}$.
\end{Lem}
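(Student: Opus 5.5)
We use Bartoszy\'nski's characterization of $\add(\nullideal)$: it is the least size of a family $F \subseteq \ww$ that goes through no slalom. So let $F \subseteq \ww$ with $|F| < \min\set{\non(\rapidideal), \boundingnum}$; we will show that $F$ goes through some slalom $S \in \slaloms{}$. The argument is the one from Lemma \ref{lem rapidideal and domination to slaloms}, with the parameter $a$ replaced by the family $F$ itself.

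First, since $|F| < \boundingnum$, there is a $d \in \ww$ dominating $F$. Replacing $d$ by $d+1$, we may assume $d(n) > 0$ for all $n$, so that the partition $\partreal{d} \in \partitioningreals$ is defined and $|I^{\partreal{d}}_n| = d(n)$. For each $f \in F$, fix $g_f \in \ww$ with $g_f \eqev f$ and $g_f(n) \leq d(n)$ for all $n$. Then $g_f(n) < 2^{d(n)} = 2^{|I^{\partreal{d}}_n|}$, and we set $x_f = \bin_{\partreal{d}}(g_f) \in \dw$.

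Let $X = \set{x_f \st f \in F}$. Then $|X| \leq |F| < \non(\rapidideal)$, so $X \in \rapidideal$, that is, $\Raisonnier{X}$ is rapid. By Theorem \ref{thm Raisonnier rapid filter characterization} (the implication (1)$\Rightarrow$(3)), applied with the partitioning real $\partreal{d}$, the set $\nat_{\partreal{d}}[X]$ goes through some slalom $S$. By Lemma \ref{lem map facts}(3), $\nat_{\partreal{d}}(x_f) = g_f$, so each $g_f$ goes through $S$. Since $f \eqev g_f$, each $f$ goes through $S$ as well. Hence $F$ is captured by a single slalom, and so $|F| < \add(\nullideal)$ by the slalom characterization, which proves the inequality.

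The only point needing care is the bound $g_f(n) < 2^{|I^{\partreal{d}}_n|}$, which makes $\bin_{\partreal{d}}$ lossless on $g_f$. This is exactly what the domination bought with $\boundingnum$ provides. The remaining steps are direct invocations of earlier results.
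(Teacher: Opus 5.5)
Your proof is correct and is essentially the paper's own argument. The paper assumes $\add(\nullideal)<\boundingnum$, takes an uncaptured family $F$ of size $\add(\nullideal)$, codes the $d$-bounded modifications $g_f$ into $2^\omega$ via the partition $\tilde{d}$, and uses the rapidity characterization to conclude that the coded set is not in $\rapidideal$; you run the identical coding contrapositively for an arbitrary $F$ of size below $\min\{\non(\rapidideal),\boundingnum\}$. The only wording to tighten is the final step: since $F$ was arbitrary, every family of size below the minimum is captured by a slalom, and that is what yields $\add(\nullideal)\geq\min\{\non(\rapidideal),\boundingnum\}$.
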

\begin{proof}
    We will assume that $\add(\nullideal) < \boundingnum$ and prove that $\add(\nullideal) \geq \non(\rapidideal)$. 
    Let $F \subseteq \ww$ be a set of reals of size $|F| = \add(\nullideal)$ that does not go through any slalom in $\slaloms{}$.
    As $|F| = \add(\nullideal) < \boundingnum$, $F$ cannot be unbounded and is thus dominated by a $d \in \ww$.
    
    For every $f \in F$ we can choose a $g_f \in \ww$ such that $g_f(n) \leq d(n)$ for all $n \in \omega$ and $f \eqev g$.
    Let $X = \set{g_f \st f \in F} \subseteq \ww$.
    For every $g \in X$ we have that $g(n) \leq d(n) < 2^{d(n)}$ for all $n \in \omega$, meaning that $\bin_{\partreal{d}}$ is injective on $X$ by Lemma \ref{lem map facts}.
    Let $Y = \bin_{\partreal{d}}[X]$, and we have that $|Y| = |X| \leq |F|$.

    Assume towards contradiction that $Y \in \rapidideal$. Then, by Theorem \ref{thm Raisonnier rapid filter characterization}, $\nat_{\partreal{d}}[Y]$ goes through a slalom $S \in \slaloms{}$.
    For every $f \in F$ we have that
    \[
        f \eqev g_f = \nat_{\partreal{d}}(\bin_{\partreal{d}}(g_f)) \in \nat_{\partreal{d}}[Y] \,.
    \]
    Since $g_f \goesthrough S$, it must then also be that $f \goesthrough S$.
    Consequently, $F$ goes through the slalom $S$, which is a contradiction. 
    As a result, $Y \notin \rapidideal$ and
    \[
        \add(\nullideal) = |F| \geq |Y| \geq \non(\rapidideal) \,.
    \]
\end{proof}
It is well known that $\add(\nullideal) \leq \boundingnum$ (see Chapter 2 of \cite{BaJu95}) and we can also easily prove the following.
\begin{Lem} \label{lem add(N) is at most non(R)}
    $\add(\nullideal) \leq \non(\rapidideal)$.
\end{Lem}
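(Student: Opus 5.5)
We will show that every $X \subseteq \dw$ with $|X| < \add(\nullideal)$ belongs to $\rapidideal$; then any set outside $\rapidideal$ has size at least $\add(\nullideal)$, so $\non(\rapidideal) \geq \add(\nullideal)$. By Theorem \ref{thm Raisonnier rapid filter characterization}, it suffices to check that for every partitioning real $d \in \partitioningreals$ the set $\nat_d[X] \subseteq \ww$ goes through some slalom $S \in \slaloms{}$.

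Fix $d \in \partitioningreals$ and consider $\nat_d[X]$. It has cardinality at most $|X| < \add(\nullideal)$. By Bartoszy\'nski's characterization, $\add(\nullideal)$ is the least size of a subset of $\ww$ that does not go through any slalom. Hence $\nat_d[X]$ is captured by some slalom $S$. Since $d$ was arbitrary, condition (3) of Theorem \ref{thm Raisonnier rapid filter characterization} holds, so $\Raisonnier{X}$ is rapid and $X \in \rapidideal$.

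We should also check edge cases. The value $\non(\rapidideal)$ is well defined because $\dw \notin \rapidideal$: Corollary \ref{cor Cantor Raisonnier filter} shows $\Raisonnier{\dw}$ is the Fr\'echet filter, which is not rapid. Finite and countable $X$ are covered by the same argument, and by the convention in the Remark the empty set is in $\rapidideal$ as well.

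There is essentially no obstacle. The only point needing care is that the slalom characterization of $\add(\nullideal)$ uses slaloms with bound $n$, exactly the $\slaloms{}$ appearing in Theorem \ref{thm Raisonnier rapid filter characterization}(3). So we can apply the characterization directly to $\nat_d[X]$, with no domination argument like the one in Lemma \ref{lem add(N) is at least min of non(R) and bound}.
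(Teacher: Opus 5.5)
Your proof is correct and is essentially the paper's argument in contrapositive form. The paper takes $X \notin \rapidideal$ with $|X| = \non(\rapidideal)$ and uses Theorem \ref{thm Raisonnier rapid filter characterization} to find a $d$ with $\nat_d[X]$ captured by no slalom; you instead show that every set of size $<\add(\nullideal)$ lies in $\rapidideal$ via the same theorem and Bartoszy\'nski's characterization, with your check that $\dw \notin \rapidideal$ (so $\non(\rapidideal)$ is defined) a harmless extra.
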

\begin{proof}
    For this, it is enough to construct a subset of $\ww$ of size $\non(\rapidideal)$ that does not go through any slalom.    
    Let $X \subseteq \dw$ of size $|X| = \non(\rapidideal)$ such that $X \notin \rapidideal$.
    By Theorem \ref{thm Raisonnier rapid filter characterization}, there exists a $d \in \partitioningreals$ such that $\nat_d[X]$ does not go through any slalom.
    By Lemma \ref{lem map facts}, $|\nat_d[X]| = |X|$ and so
    \[
        \add(\nullideal) \leq |\nat_d[X]| = |X| = \non(\rapidideal)\,.
    \]
\end{proof}

Combining the previous two lemmas, we obtain the relation below.
\begin{Thm} \label{thm add(N) = min(non(R),b)}
    $\add(\nullideal) = \min\set{\non(\rapidideal), \boundingnum}$.
\end{Thm}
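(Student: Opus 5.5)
The theorem follows by combining the two preceding lemmas with the classical inequality $\add(\nullideal) \leq \boundingnum$ (Chapter 2 of \cite{BaJu95}). We prove the two inequalities separately.

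For the upper bound $\add(\nullideal) \leq \min\set{\non(\rapidideal), \boundingnum}$:
\begin{itemize}
\item Lemma \ref{lem add(N) is at most non(R)} gives $\add(\nullideal) \leq \non(\rapidideal)$.
\item The classical result gives $\add(\nullideal) \leq \boundingnum$.
\item Hence $\add(\nullideal)$ is at most the minimum of the two.
\end{itemize}

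For the lower bound $\add(\nullideal) \geq \min\set{\non(\rapidideal), \boundingnum}$, we invoke Lemma \ref{lem add(N) is at least min of non(R) and bound} directly.

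The lemma's proof begins with "we assume $\add(\nullideal) < \boundingnum$ and prove $\add(\nullideal) \geq \non(\rapidideal)$", so the lower bound is justified by a case distinction:
\begin{itemize}
\item If $\add(\nullideal) \geq \boundingnum$, then $\add(\nullideal) \geq \min\set{\non(\rapidideal), \boundingnum}$ trivially.
\item Otherwise $\add(\nullideal) < \boundingnum$, and the argument of the lemma yields $\add(\nullideal) \geq \non(\rapidideal)$, which again suffices.
\end{itemize}
In fact, by the classical inequality the first case forces $\add(\nullideal) = \boundingnum$.

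One detail in that lemma should be checked. The set $Y = \bin_{\partreal{d}}[X]$ has cardinality at most $\add(\nullideal)$ and is not in $\rapidideal$. The definition of $\non$ requires only that $Y$ be a non-member of $\rapidideal$; it does not require the cardinality to be attained exactly. So $\non(\rapidideal) \leq |Y| \leq \add(\nullideal)$, as required.

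Combining the upper and lower bounds gives the stated equality. There is no real obstacle here; the main care is in the case distinction above, together with the bookkeeping that $|F| = \add(\nullideal)$ is witnessed by an unbounded-family-free set, which is Bartoszy\'nski's slalom characterization of $\add(\nullideal)$.
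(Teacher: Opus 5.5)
Your proposal is correct and is exactly the paper's argument: the upper bound comes from Lemma \ref{lem add(N) is at most non(R)} together with the classical inequality $\add(\nullideal)\leq\boundingnum$. The lower bound is Lemma \ref{lem add(N) is at least min of non(R) and bound}, whose proof already contains implicitly the case split you spell out. One wording fix: the set $F$ in that lemma is a family of size $\add(\nullideal)$ that no slalom captures (Bartoszy\'nski's characterization), and it is \emph{bounded} precisely because $|F|<\boundingnum$, so ``unbounded-family-free'' is not the right description.
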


When working with cardinal characteristics of such ideals, there is often a notion of duality (see \cite{HandbookCardinals, HandbookMeasureCategory, BaJu95}).
In our case, we have the following result, which is the dual of Lemma \ref{lem add(N) is at least min of non(R) and bound}.
\begin{Thm} \label{thm cof(N) is at most max of cov(R) and dom}
    $\cof(\nullideal) \leq \max\set{\cov(\rapidideal), \dominatingnum}$.
\end{Thm}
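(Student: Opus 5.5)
We dualize the proof of Lemma \ref{lem add(N) is at least min of non(R) and bound}, using Bartoszy\'nski's characterization of $\cof(\nullideal)$: it is the least size of a family of slaloms such that every $f \in \ww$ goes through one of them. So it suffices to produce a family of at most $\max\set{\cov(\rapidideal), \dominatingnum}$ slaloms capturing every real in $\ww$.

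\emph{Step 1: fixing the ingredients.} First fix a dominating family $D \subseteq \ww$ with $|D| = \dominatingnum$. We may assume $d(n) > 0$ for all $d \in D$ and $n \in \omega$. Next fix a cover $\set{Y_\alpha \st \alpha < \cov(\rapidideal)}$ of $\dw$ by sets in $\rapidideal$.

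\emph{Step 2: attaching slaloms.} For each pair $(d, \alpha) \in D \times \cov(\rapidideal)$, the filter $\Raisonnier{Y_\alpha}$ is rapid. By Theorem \ref{thm Raisonnier rapid filter characterization}(3), applied to the partitioning real $\partreal{d}$, the set $\nat_{\partreal{d}}[Y_\alpha]$ goes through some slalom $S_{d,\alpha} \in \slaloms{}$. The family $\set{S_{d,\alpha} \st d \in D,\ \alpha < \cov(\rapidideal)}$ then has size at most $\dominatingnum \cdot \cov(\rapidideal) = \max\set{\cov(\rapidideal), \dominatingnum}$. This uses that both cardinals are infinite; $\cov(\rapidideal) \geq \aleph_1$ because $\rapidideal$ is a $\sigma$-ideal containing singletons.

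\emph{Step 3: every real is captured.} Let $f \in \ww$ and pick $d \in D$ with $f \leqev d$. As in the proof of Lemma \ref{lem rapidideal and domination to slaloms}, choose $g \eqev f$ with $g(n) \leq d(n) < 2^{d(n)} = 2^{|I^{\partreal{d}}_n|}$ for all $n$. Let $x = \bin_{\partreal{d}}(g) \in \dw$, and pick $\alpha$ with $x \in Y_\alpha$. By Lemma \ref{lem map facts}(3),
\[
    g = \nat_{\partreal{d}}(x) \in \nat_{\partreal{d}}[Y_\alpha] \,,
\]
so $g \goesthrough S_{d,\alpha}$. Since going through a slalom only depends on the values of a real from some point on, $f \eqev g$ gives $f \goesthrough S_{d,\alpha}$.

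\emph{Main obstacle.} The only delicate point is that the partition $\partreal{d}$ must be chosen according to the dominating real $d$, not according to the cover. This is exactly why we index slaloms by pairs $(d,\alpha)$ and pay a product $\dominatingnum \cdot \cov(\rapidideal)$ instead of a single cardinal. Once this is arranged, the argument is routine bookkeeping mirroring Lemma \ref{lem add(N) is at least min of non(R) and bound}.
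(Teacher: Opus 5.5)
Your proof is correct and follows the paper's argument step for step: you fix a dominating family $D$ and an $\rapidideal$-cover of $\dw$, and obtain the slaloms $S_{X,d}$ from Theorem~\ref{thm Raisonnier rapid filter characterization} applied to $\partreal{d}$. Each $f$ is then captured by replacing it with some $g \eqev f$ bounded by $d$ and coding $g$ as $\bin_{\partreal{d}}(g)$, and your check that both cardinals are infinite, so that the product equals the maximum, is a detail the paper leaves implicit.
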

\begin{proof}
    It is enough to construct a collection $\mathcal{C} \subseteq \slaloms{}$ of size $\max\set{\dominatingnum, \cov(\rapidideal)}$ so that every real goes through some $S \in \mathcal{C}$.
    Let $\mathcal{X} \subseteq \rapidideal$ be a collection of size $|\mathcal{X}| = \cov(\rapidideal)$ that covers $\ww$, that is, for every $x \in \ww$ there exists an $X \in \mathcal{X}$ such that $x \in X$. 
    Let $D \subseteq \dw$ be a dominating family of size $|D| = \dominatingnum$.
    Without loss of generality, we can assume that $d(n) > 0$ for every $d \in D$ and $n \in \omega$.
    By Theorem \ref{thm Raisonnier rapid filter characterization}, for every $X \in \mathcal{X}$ and $d \in D$, $\nat_{\partreal{d}}[X]$ goes through a slalom $S_{X,d}$. Let
    \[
        \mathcal{C} = \set{S_{X,d} \st X \in \mathcal{X}, \, d\in D} \,
    \]
    denote the collection of these slaloms.
    Let $f \in \ww$. By assumption, there exists a $d \in D$ such that $f \leqev d$, which in turn means that we can find a $g \in \ww$ such that $f \eqev g$ and $g(n) \leq d(n)$ for all $n \in \omega$.
    By our assumption about $\mathcal{X}$, there exists an $X \in \mathcal{X}$ such that $\bin_{\partreal{d}}(g) \in X$.
    By Lemma \ref{lem map facts}, we have that
    \[
        g = \nat_{\partreal{d}}(\bin_{\partreal{d}}(g)) \in \nat_{\partreal{d}}[X]
    \]
    and, as a result, $g \goesthrough S_{X,d}$. Consequently, $f \goesthrough S_{X,d}$ as well. 
    We have thus proven that for every $f \in \ww$ there exists a slalom $S_{X,d} \in \mathcal{C}$ such that $f \goesthrough S_{X,d}$ and so
    \[
        \cof(\nullideal) \leq
        |\mathcal{C|} \leq
        |X| \cdot |D| = 
        \cov(\rapidideal) \cdot \dominatingnum = 
        \max\set{\cov(\rapidideal), \dominatingnum} \,.
    \]
\end{proof}

\section{Conclusions and Open Questions}

 The investigations in this paper have shed light on  the combinatorics of the Raisonnier filter and its relation to binary slaloms and Lebesgue measurability. 

One of the most important results is Theorem 5.3, but we do not actually know whether the additional assumption about the existence of dominating reals is necessary, or could be eliminated. 

\begin{Question} Does the property $\forall a \in \ww \: (\Raisonnier{\dw \cap L[a]}$ is rapid) imply $\forall a  \in \ww \: \exists d  \in \ww(d$ is dominating over $L[a]$)?
\end{Question}

If the answer is affirmative, this will immediately lead to a complete (and not partial) converse to Raisonnier's theorem. On the other hand, one may try to show that dominating reals are unavoidable, as follows:

\begin{Question} \label{lastone} Is it consistent that ``$\forall a \in \ww \: (\Raisonnier{\dw \cap L[a]}$ is rapid)'' is true while ``all $\SIGMA^1_2$ sets are Lebesgue measurable'' is false? \end{Question} 

Constructing a model for this would require a forcing  notion which generically adds $d$-binary Slaloms for every $d$, but does not add dominating reals.

Note that the statement ``$\forall a \in \ww \: (\Raisonnier{\dw \cap L[a]}$ is rapid)'' can be understood as a transcendence property over $L$. Such properties are often equivalent to regularity hypotheses for $\SIGMA^1_2$ or $\DELTA^1_2$ sets. If the answer to Question \ref{lastone} is positive, then the following is a natural follow-up question:

\begin{Question} Does there exist a regularity property $P$ such that the statement ``all $\SIGMA^1_2$ sets or $\DELTA^1_2$ sets have the property $P$'' is equivalent to the statement ``$\forall a \in \ww \: (\Raisonnier{\dw \cap L[a]}$ is rapid)''? \end{Question}

\bigskip Finally, turning our attention to the Raisonnier Ideal $\mathcal{R}$, we have the natural conjecture that Lemma \ref{lem add(N) is at most non(R)} can be dualized.

\begin{Conjecture}
    $\cov(\mathcal{R}) \leq \cof(\mathcal{N})$. Consequently:  $\cof(\nullideal) = \max\set{\cov(\rapidideal), \dominatingnum}$.
    \end{Conjecture}




\bibliographystyle{plain}
\bibliography{Khomskii_Master_Bibliography}{}

\end{document}

%% file: newcommands.tex
\newcommand{\p}{\medskip \noindent}

\newcommand{\PP}{\mathscr{P}}

\newcommand{\ZFC}{{\rm \sf ZFC}}

\newcommand{\SIGMA}{\boldsymbol{\Sigma}}
\newcommand{\DELTA}{\boldsymbol{\Delta}}

\newcommand{\ww}{\omega^\omega}

\newcommand{\dw}{2^\omega}
\newcommand{\dlw}{2^{<\omega}}

\newcommand{\F}{\mathcal{F}}

\newcommand{\N}{\mathcal{N}}

\newcommand{\cov}{{\rm cov}}
\newcommand{\add}{{\rm add}}
\newcommand{\non}{{\rm non}}
\newcommand{\cof}{{\rm cof}}

\newtheorem{Thm}{Theorem}[section]

\newtheorem{Lem}[Thm]{Lemma}
\newtheorem{Cor}[Thm]{Corollary}
\newtheorem{Prop}[Thm]{Proposition}
\newtheorem{Conjecture}[Thm]{Conjecture}

\theoremstyle{definition}
\newtheorem{Def}[Thm]{Definition}

\newtheorem{Remark}[Thm]{Remark}

\newtheorem{Question}[Thm]{Question}

